\newtheorem{theorem}{Theorem}[section]
\newtheorem{corollary}[theorem]{Corollary}
\newtheorem{proposition}[theorem]{Proposition}
\newtheorem{lemma}[theorem]{Lemma}
\theoremstyle{definition}
\newtheorem{remark}[theorem]{Remark}
\newcommand{\bpolkm}{\mathrm B^{\rm pol}_{\mathbb K,m}}
\newcommand{\bmultkm}{\mathrm B^{\rm mult}_{\mathbb K,m}}
\newcommand{\bpolcm}{\mathrm B^{\rm pol}_{\mathbb C,m}}
\newcommand{\bmultcm}{\mathrm B^{\rm mult}_{\mathbb C,m}}
\newcommand{\bmultrm}{\mathrm B^{\rm mult}_{\mathbb R,m}}
\newcommand{\bmultc}[1]{\mathrm B^{\rm mult}_{\mathbb C,#1}}
\newcommand{\bmultk}[1]{\mathrm B^{\rm mult}_{\mathbb K,#1}}
\newcommand{\Mmn}{\mathcal M(m,n)}
\newcommand{\Jmn}{\mathcal J(m,n)}
\newcommand{\bi}{\mathbf i}
\newcommand{\Pkm}[1]{\mathcal P_{#1}(m)}
\newcommand{\veps}{\varepsilon}
\newcommand{\Arp}[1]{\mathrm A_{\mathbb R,#1}}
\newcommand{\Acp}[1]{\mathrm A_{\mathbb C,#1}}
\newcommand{\Akp}[1]{\mathrm A_{\mathbb K,#1}}
\newcommand{\TT}{\mathbb T}
\begin{document}

\title[The Bohr radius of $\mathbb D^n$]{The Bohr radius of the $n$-dimensional polydisk\\ is equivalent to $\sqrt{(\log n)/n}$}

\date{}

\author[Bayart]{Fr\'{e}d\'{e}ric Bayart}
\address{Laboratoire de Math\'{e}matique, \newline \indent
Universit\'{e} Blaise Pascal Campus des C\'{e}zeaux, \newline \indent
F-63177 Aubiere Cedex, France.}
\email{Frederic.Bayart@math.univ-bpclermont.fr}

\author[Pellegrino]{Daniel Pellegrino}
\address{Departamento de Matem\'{a}tica, \newline \indent
Universidade Federal da Para\'{i}ba, \newline \indent
58.051-900 - Jo\~{a}o Pessoa, Brazil.}
\email{dmpellegrino@gmail.com and pellegrino@pq.cnpq.br}

\author[Seoane]{Juan B. Seoane-Sep\'{u}lveda}
\address{Departamento de An\'{a}lisis Matem\'{a}tico,\newline\indent Facultad de Ciencias Matem\'{a}ticas, \newline\indent Plaza de Ciencias 3, \newline\indent Universidad Complutense de Madrid,\newline\indent Madrid, 28040, Spain.}
\email{jseoane@mat.ucm.es}

\keywords{Bohr radius; Interpolation; Bohnenblust--Hille inequality}

\begin{abstract}
We show that the Bohr radius of the polydisk $\mathbb D^n$ behaves asymptotically as $\sqrt{(\log n)/n}$. Our argument is based on a new interpolative approach to the Bohnenblust--Hille inequalities which allows us to prove, among other results, that the polynomial Bohnenblust--Hille inequality is subexponential.
\end{abstract}

\maketitle


\section{Introduction}

Following Boas and Khavinson \cite{BK97}, the Bohr radius $K_n$ of the $n$-dimensional polydisk
is the largest positive number $r$ such that all polynomials
$\sum_{\alpha}a_\alpha z^\alpha$ on $\mathbb C^n$ satisfy
$$\sup_{z\in r\mathbb D^n}\sum_{\alpha}|a_\alpha z^{\alpha}|\leq \sup_{z\in\mathbb D^n}\left|\sum_{\alpha}a_\alpha z^\alpha\right|.$$
The Bohr radius $K_1$ was studied and estimated by H. Bohr himself, and it was shown independently by M. Riesz, I. Schur and F. Wiener that $K_1=1/3$.
For $n\geq 2$, exact values or $K_n$ are unknown. However, in \cite{BK97}, the two inequalities
\begin{equation}\label{EQBOASKHA}
 \frac 13\sqrt{\frac 1n}\leq K_n\leq 2\sqrt{\frac{\log n}n}
\end{equation}
were established.

The paper of Boas and Khavinson was a source of inspiration for many subsequent papers, linking the asymptotic behaviour of $K_n$ to various problems in
functional analysis (geometry of Banach spaces, unconditional basis constant of spaces of polynomials, etc.), we refer to \cite{DP06} for a survey of some of them.
Hence there was a big interest in recent years in determining the behaviour of $K_n$ for large values of $n$.

In \cite{DeFr06}, the left inequality of (\ref{EQBOASKHA}) was improved to $K_n\geq c\sqrt{\log n/(n\log\log n)}$.
In \cite{DFOOS10}, using the hypercontractivity of the polynomial Bohnenblust--Hille inequality, the authors showed that
$$K_n=b_n\sqrt{\frac{\log n}n}\textrm{ with }\frac1{\sqrt 2}+o(1)\leq b_n\leq 2.$$
Our first main result is the exact asymptotic behaviour of $K_n$. More precisely, we prove that

$$K_n\sim_{+\infty}\sqrt{\frac{\log n}n}.$$

\smallskip

The main tool used to prove this, the second main result of this paper, is a substantial improvement of the polynomial Bohnenblust--Hille inequality. Recall that the multilinear Bohnenblust--Hille inequality (see \cite{bh}) asserts that, for any $m\geq 1$, there exists a constant $C_m\geq 1$ such that,
for all $m$-linear forms $L:c_0\times\dots\times c_0\to\mathbb K$,
$$
\left(  \sum\limits_{i_{1},\ldots,i_{m}=1}^{\infty}\left\vert L(e_{i_{^{1}}%
},\ldots,e_{i_{m}})\right\vert ^{\frac{2m}{m+1}}\right)  ^{\frac{m+1}{2m}}\leq
C_{m}\left\Vert L\right\Vert.
$$
The optimal constant $C_m$ in this inequality will be denoted by $\bmultkm$. The case $m=2$ is nothing else but the famous Littlewood's 4/3 inequality.

Using polarization, Bohnenblust and Hille also obtained a polynomial version of this inequality:
for any $m\geq 1$, there exists a constant $D_m\geq 1$ such that, for any $n\geq 1$, for any $m$-homogeneous polynomial $P(z)=\sum_{|\alpha|=m}a_\alpha z^{\alpha}$ on $\mathbb C^n$,
$$\left(\sum_{|\alpha|=m}|a_{\alpha}|^{\frac{2m}{m+1}}\right)^{\frac{m+1}{2m}}\leq D_m \|P\|_\infty,$$
where $\|P\|_\infty=\sup_{z\in\mathbb D^n}|P(z)|$. In turn, the best constant $D_m$ in this inequality will be denoted by $\bpolkm$.

\smallskip

These inequalities have been proven to be very useful and powerful in analysis: for instance, to estimate the abscissae of convergence of Dirichlet series (this was the initial goal of Bohnenblust and Hille), to study the behavior of power series in several complex variables (this is the so-called Bohr radius problem mentioned above) or in quantum physics (see \cite{monta}). In these applications, it turns out that having good estimates of the constants $\bpolkm$ and $\bmultkm$ is crucial.

There are several proofs of the Bohnenblust-Hille inequalities, some of which are presented in \cite{surv}. Very recently, the authors gave in \cite{ba} yet another one, based on interpolation. It leads to the following enhancement:  if $m\geq1$ and $q_{1},\ldots,q_{m}\in\lbrack1,2],$ then the
following assertions are equivalent:

\begin{itemize}
\item[(A)] There is a constant $\mathrm B_{\mathbb K,{q_{1},\ldots, q_{m}}}\geq1$ such that{\small {
\begin{equation}\label{EQBHGENERALIZED}
\left(  {\textstyle\sum\limits_{i_{1}=1}^{\infty}}\left(  {\textstyle\sum
\limits_{i_{2}=1}^{\infty}}\left(  \ldots \left(  {\textstyle\sum\limits_{i_{m-1}%
=1}^{\infty}}\left(  {\textstyle\sum\limits_{i_{m}=1}^{\infty}}\left\vert
L\left(  e_{i_{1}},\ldots,e_{i_{m}}\right)  \right\vert ^{q_{m}}\right)
^{\frac{q_{m-1}}{q_{m}}}\right)  ^{\frac{q_{m-2}}{q_{m-1}}}\cdots\right)
^{\frac{q_{2}}{q_{3}}}\right)  ^{\frac{q_{1}}{q_{2}}}\right)  ^{\frac{1}%
{q_{1}}}\leq \mathrm B_{\mathbb K,{q_{1},\ldots ,q_{m}}}\left\Vert L\right\Vert%
\end{equation}
}}for all continuous $m$-linear forms $L:c_{0}\times\cdots\times
c_{0}\rightarrow\mathbb{K}$.

\item[(B)] $\frac{1}{q_{1}}+\cdots+\frac{1}{q_{m}}\leq\frac{m+1}{2}.$
\end{itemize}
Compared to all previous known proofs of the Bohnenblust--Hille inequality, the proof in  \cite{ba} is probably the simplest. However, it gives bad constants: indeed, they have an
exponential growth in the extremal case $q_{1}=\cdots=q_{m}=\frac{2m}{m+1}$, and this is a little bit {\em disappointing} at a first glance, having in mind that the optimal constants of the multilinear Bohnenblust--Hille inequality
have a subpolynomial growth (see \cite{jfalimite}).

In this paper we improve the best known estimates on $\bpolkm$ and $\bmultkm$. To explain our results, let us give a short historical account of the improvements on the control of the growth of $\bpolkm$ and $\bmultkm$ for the case of complex scalars:
\begin{itemize}
\item for the multilinear case, $\bmultcm\leq m^{\frac{m+1}{2m}}(\sqrt 2)^{m-1}$ (Bohnenblust and Hille, 1931),
$\bmultcm\leq (\sqrt 2)^{m-1}$ (Davie \cite{Dav73} in 1973, Kaijser \cite{Ka} in 1978),  $\bmultcm\leq \left(\frac 2{\sqrt \pi}\right)^{m-1}$ (Queff\'{e}lec \cite{Qu95} in 1995).
A big step was done after the publication of \cite{def} on separately multiple summing mappings: $\bmultcm$ is subpolynomial (\cite{jfalimite} in 2013). The smallest known upper bound is
$$\bmultcm\leq \frac 2{\sqrt \pi} m^{\log_2{e^{\frac 12-\frac\gamma 2}}},$$
where $\gamma$ is the Euler-Mascheroni constant (this can be proved by refining the argument used in \cite{jfalimite}).
\item for the polynomial case, polarization shows that
$$\bpolcm\leq \bmultcm\frac{m^{\frac m2}(m+1)^{\frac{m+1}2}}{2^m (m!)^{\frac{m+1}{2m}}}.$$
Avoiding a direct use of polarization, a much better estimate was obtained in \cite{DFOOS10} (2011):
\begin{equation}\label{EQBHDEFANT}
\bpolcm\leq \left(1+\frac 1{m-1}\right)^{m-1}\sqrt m(\sqrt 2)^{m-1}.
\end{equation}
We show that we can go further: $\bpolcm$ is, actually, subexponential!
\end{itemize}

\begin{theorem}\label{THMMAINPOL}
For every $\kappa_2>1$, there exists $\kappa_1>0$ such that, for any $m\geq 1$,
$$\bpolcm\leq\kappa_1\exp(\kappa_2\sqrt{m\log m}).$$
\end{theorem}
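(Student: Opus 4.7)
The plan is to combine the mixed-exponent multilinear Bohnenblust--Hille inequality \eqref{EQBHGENERALIZED} of \cite{ba} with the subpolynomial growth of $\bmultcm$ from \cite{jfalimite}, through a \emph{partial} polarization of $P$ controlled by an auxiliary parameter $k\in\{1,\dots,m\}$ which is optimized at the end to balance the two contributions. The central insight is that the exponential loss in the classical polarization inequality is an artifact of fully unfolding an $m$-homogeneous polynomial into an $m$-linear form; polarizing only into $k$ block-arguments, and applying the $k$-linear Bohnenblust--Hille inequality to the resulting form, converts that exponential loss into a term of the form $\exp(O((m/k)\log(m/k)))$, which becomes subexponential for an appropriate choice of $k$.

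Concretely, given $P(z)=\sum_{|\alpha|=m}a_\alpha z^\alpha$ and $k\leq m$, I would associate a multilinear form $L$ of degree $k$ acting on $k$ copies of a larger symmetric-tensor space, obtained by grouping the $m$ monomial factors of $P$ into $k$ blocks of size $m/k$ and polarizing across the blocks rather than across the singletons. The inequality $\|L\|\lesssim \|P\|_\infty$ then holds with a polarization constant that is subexponential in $k$, and in particular much smaller than the full $m^m/m!\sim e^m$ factor. Applying \eqref{EQBHGENERALIZED} to $L$ with exponents $(q_1,\dots,q_k)$ in the scope of the $k$-linear Bohnenblust--Hille inequality bounds a mixed-$\ell_q$ norm of the coefficients $L(e_{\mathbf i_1},\dots,e_{\mathbf i_k})$ by $\bmultc{k}\,\|L\|$, with $\bmultc{k}=\exp(O(\log k))$.

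Next, I would convert this mixed-norm bound into the target $\ell_{2m/(m+1)}$ bound on the $a_\alpha$'s by a H\"older exchange that reabsorbs the multinomial weights arising from the partial polarization. Tracking all factors, an estimate of the form
$$
\bpolcm \;\leq\; \bmultc{k}\cdot\exp\!\bigl(c\,\tfrac{m}{k}\log\tfrac{m}{k}\bigr)
$$
emerges. Balancing $\log\bmultc{k}=O(\log k)$ against the exponent $(m/k)\log(m/k)$, the optimum is attained at $k\asymp\sqrt{m\log m}$, and substituting back gives $\log\bpolcm=O(\sqrt{m\log m})$; after absorbing the implicit constant into $\kappa_2$ (at the cost of inflating $\kappa_1$), this is the statement.

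The main obstacle is the H\"older exchange in the second step: one must verify that the conversion from the mixed-$\ell_q$ norm of the coefficients of $L$ into the pure $\ell_{2m/(m+1)}$ norm of the $a_\alpha$'s costs only $\exp(O((m/k)\log(m/k)))$ rather than the trivial $\exp(O(m))$. This hinges on a careful matching of the block partition to the exponents selected in \eqref{EQBHGENERALIZED}, so that the multinomial weights from the partial polarization are genuinely absorbed by the chosen $q_j$'s; a naive application here would recover only the classical exponential bound \eqref{EQBHDEFANT}, not the subexponential one.
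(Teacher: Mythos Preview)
Your intermediate estimate $\bpolcm \leq \bmultc{k}\cdot\exp\bigl(c\,(m/k)\log(m/k)\bigr)$ cannot be right, and your own optimization betrays this: if it held, taking $k$ proportional to $m$ would make the exponential factor bounded and yield polynomial growth of $\bpolcm$, far stronger than the theorem; the minimum is certainly not at $k\asymp\sqrt{m\log m}$. In fact Harris's constant for polarization into $k$ equal blocks of size $m/k$ is of order $(m/k)^{k/2}=\exp\bigl((k/2)\log(m/k)\bigr)$---the $k$ sits in the numerator of the exponent, not the denominator---and at your $k\asymp\sqrt{m\log m}$ this factor alone is $\exp\bigl(c\sqrt{m}(\log m)^{3/2}\bigr)$, already too large. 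The ``H\"older exchange'' you flag as the main obstacle is indeed where all the content lies, and nothing in your sketch indicates how to carry it out at subexponential cost; applying the $k$-linear Bohnenblust--Hille inequality on a symmetric-tensor space also presupposes control of the form's norm on vectors that are not pure tensors $z^{\otimes m/k}$, which you do not address.

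The paper's argument is organized differently. One fully polarizes $P$ to its symmetric $m$-form $L$ and, for each $S\in\Pkm{k}$, freezes $k$ coordinates as singletons while keeping the remaining $m-k$ on the diagonal, considering $z\mapsto L(e_{i_1},\dots,e_{i_k},z,\dots,z)$. The passage from the $\ell_{2m/(m+1)}$ norm of the $a_\alpha$ to a product over $S$ of mixed $\ell_{2k/(k+1)}(\ell_2)$ norms is the interpolative Blei-type inequality of Theorem~\ref{THMBLEIGENERALIZED}; the inner $\ell_2$ is then bounded by the polynomial Khintchine inequality (Lemma~\ref{LEMMONAT}), contributing $(1+1/k)^{(m-k)/2}$; the outer $\ell_{2k/(k+1)}$ by the $k$-linear Bohnenblust--Hille inequality together with Harris's estimate on the single diagonal block of size $m-k$. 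This yields (Theorem~\ref{THMPOLMULT})
\[
\bpolcm\leq \Bigl(1+\tfrac 1k\Bigr)^{(m-k)/2}\,\frac{m^m}{(m-k)^{m-k}}\,\Bigl(\tfrac{(m-k)!}{m!}\Bigr)^{1/2}\,\bmultc{k},
\]
and the two dominant factors $\exp(m/(2k))$ and $m^{k/2}$ balance at $k=\sqrt{m/\log m}$ (not $\sqrt{m\log m}$) to give $\exp\bigl((1+o(1))\sqrt{m\log m}\bigr)$.
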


Regarding the multilinear Bohnenblust--Hille inequality, we are able to improve the best upper bound by showing that
$$\bmultcm\leq \kappa m^{\frac{1-\gamma}2}.$$
Moreover, our proof is rather more elementary than the proof given in \cite{jfalimite}.

\medskip

Many modern proofs of the Bohnenblust-Hille inequalities depend on variants of an inequality due to Blei (see \cite{Bl79}). The proofs of these
inequalities are rather technical. Using interpolation, as in \cite{ba}, we shall give (in Section \ref{SECBLEI}) new proofs of these inequalities, providing also improvements of them.
 These improvements are the basis to our new upper bounds for the  Bohnenblust-Hille constants, which can be found in Sections \ref{SECMULTI} and \ref{SECPOL}.
 In Section \ref{SECOTHER}, we show that if $(q_1,\dots,q_m)$ is close to the extremal exponent $(\frac{2m}{m+1},\dots,\frac{2m}{m+1})$, then the best $\mathrm B_{\mathbb K,{q_{1},\ldots, q_{m}}}$ in (\ref{EQBHGENERALIZED}) has also a subpolynomial growth. Finally, in Section \ref{SECBOHR}, we apply Theorem \ref{THMMAINPOL} to obtain the precise asymptotic behaviour of the $n$-dimensional Bohr radius.

\section{A new interpolative insight of Blei's inequalities}\label{SECBLEI}
We need to introduce some notations. For two positive integers $n,m$, we set
 \begin{eqnarray*}
\Mmn&=&\big\{\bi=(i_1,\dots,i_m); \ i_1,\dots,i_m\in\{1,\dots,n\}\big\}\\
\Jmn&=&\big\{\bi\in\Mmn;\ i_1\leq i_2\leq\dots\leq i_m\big\}.
\end{eqnarray*}
For $1\leq k\leq m$, let $\Pkm{k}$ denote the set of subsets of $\{1,\dots,m\}$ with cardinal $k$. For $S=\{s_1,\dots,s_k\}$ in $\Pkm{k}$,
$\hat S$ will denote its complement in $\{1,\dots,m\}$ and $\bi_S$ shall mean $(i_{s_1},\dots,i_{s_k})\in\mathcal M(k,n)$.

For $\mathbf q=(q_1,\dots,q_m)\in [1,+\infty)^m$, we shall consider the Lorentz space $\ell_{\mathbf q}=\ell_{q_1}\big(\ell_{q_2}(\dots(\ell_{q_m}(\mathbb N)))\big)$, namely $(a_{\bi})\in\ell_{\mathbf q}$ if and only if
$$\left(  {\textstyle\sum\limits_{i_{1}=1}^{\infty}}\left(  {\textstyle\sum
\limits_{i_{2}=1}^{\infty}}\left(  \ldots \left(  {\textstyle\sum\limits_{i_{m-1}%
=1}^{\infty}}\left(  {\textstyle\sum\limits_{i_{m}=1}^{\infty}}\left\vert
a_{\bf i}\right\vert ^{q_{m}}\right)
^{\frac{q_{m-1}}{q_{m}}}\right)  ^{\frac{q_{m-2}}{q_{m-1}}}\cdots\right)
^{\frac{q_{2}}{q_{3}}}\right)  ^{\frac{q_{1}}{q_{2}}}\right)  ^{\frac{1}%
{q_{1}}}<+\infty.$$
We will interpolate between Lorentz spaces. It is well-known (see \cite{ba,berg}) that if $\mathbf p,\mathbf q\in [1,+\infty)^m$ and
$\theta\in (0,1)$, then
$$[\ell_{\mathbf p},\ell_{\mathbf q}]_{\theta}=\ell_{\mathbf r}$$
with $\frac 1{r_i}=\frac{\theta}{p_i}+\frac{1-\theta}{q_i}$ for $1\leq i\leq m$.

\smallskip

Our second main tool is a consequence of Minkowski's inequality which can be found in, e.g., \cite[Corollary 5.4.2]{garling}: for any $0<p\leq q<+\infty$ and for any sequence of complex  numbers $(c_{i,j})$,
$$\left(\sum_i\left(\sum_j |c_{i,j}|^p\right)^{q/p}\right)^{1/q}\leq \left(\sum_j\left(\sum_i |c_{i,j}|^q\right)^{p/q}\right)^{1/p}.$$
In particular, let $S\in\Pkm{k}$, let $\lambda\in[1,2]$ and let $\mathbf q=(q_1,\dots,q_m)$ with $q_i=\lambda$ if $i\in S$, $q_i=2$ otherwise. Then an easy induction shows that, for any family of complex numbers $(a_{\bf i})_{\bi\in\Mmn}$,
$$\|a\|_{\ell_\mathbf q}\leq \left(\sum_{\bi_S}\left(\sum_{\bi_{\hat S}} |a_{\bi}|^2\right)^{\frac\lambda 2}\right)^{\frac 1\lambda}$$
(see also \cite[Proposition 3.1]{ba}), the symbol $\sum_{\bi_S}$ meaning $\sum_{i_{s_1}=1}^n \dots\sum_{i_{s_k}=1}^n$.

\smallskip

Blei's inequality (see \cite{DFOOS10}) states that, for all families of complex numbers $(a_{\bf i})_{\bi\in\Mmn}$,  we have
$$\left(\sum_{\bi \in\Mmn}|a_{\bi}|^{\frac{2m}{m+1}}\right)^{\frac{m+1}{2m}}\leq \prod_{1\leq j\leq m}\left(\sum_{i_j=1}^n\left(
\sum_{\stackrel{i_1,\dots,i_{j-1},}{i_{j+1},\dots,i_m=1}}^n |a_{\bi}|^2\right)^{1/2}\right)^{1/m}.$$
With our notations,
$$\left(\sum_{\bi \in\Mmn}|a_{\bi}|^{\frac{2m}{m+1}}\right)^{\frac{m+1}{2m}}\leq \prod_{S\in\Pkm{1}}\left(\sum_{\mathbf i_S}\left(
\sum_{\mathbf i_{\hat S}} |a_{\bi}|^2\right)^{1/2}\right)^{1/m}.$$
As an application of our interpolative approach, we generalize this inequality by replacing $\Pkm{1}$ by any $\Pkm{k}$, $1\leq k\leq m$. This result will be crucial later.
\begin{theorem}\label{THMBLEIGENERALIZED}
Let $m,n\geq 1$ and $1\leq k\leq m$. Then for all families $(a_{\bi})_{\bi\in\Mmn}$ of complex numbers,
$$\left(\sum_{\bi \in\Mmn}|a_{\bi}|^{\frac{2m}{m+1}}\right)^{\frac{m+1}{2m}}\leq \prod_{S\in\Pkm{k}}\left(\sum_{\mathbf i_S}\left(
\sum_{\mathbf i_{\hat S}} |a_{\bi}|^2\right)^{\frac12\times\frac{2k}{k+1}}\right)^{\frac{k+1}{2k}\times\frac 1{\binom mk}}.$$
\end{theorem}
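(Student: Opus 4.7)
The plan is to recover the diagonal exponent $\tfrac{2m}{m+1}$ as a geometric interpolation of $\binom{m}{k}$ carefully chosen mixed-norm Lorentz norms, and then to bound each mixed norm by the corresponding factor on the right-hand side using the Minkowski-type inequality recalled just above the theorem statement. For each $S\in\Pkm{k}$, I introduce the exponent vector $\mathbf q^S=(q_1^S,\dots,q_m^S)$ with $q_i^S=\lambda:=\tfrac{2k}{k+1}$ when $i\in S$ and $q_i^S=2$ when $i\in\hat S$. Since $\lambda\in[1,2]$, the Minkowski-type step yields immediately
$$
\|a\|_{\ell_{\mathbf q^S}}\leq \left(\sum_{\bi_S}\left(\sum_{\bi_{\hat S}}|a_{\bi}|^{2}\right)^{\frac12\cdot\frac{2k}{k+1}}\right)^{\frac{k+1}{2k}},
$$
which is exactly the $S$-factor appearing on the right of the theorem. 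It therefore suffices to establish
$$
\|a\|_{\ell_{2m/(m+1)}}\leq\prod_{S\in\Pkm{k}}\|a\|_{\ell_{\mathbf q^S}}^{1/\binom mk}.
$$

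To prove this, I interpolate the $N:=\binom{m}{k}$ Lorentz spaces $\ell_{\mathbf q^S}$ with equal weights $1/N$. The two-space identity $[\ell_{\mathbf p},\ell_{\mathbf q}]_\theta=\ell_{\mathbf r}$ recalled at the start of Section~\ref{SECBLEI} is iterated: if after $n$ steps we have produced $\ell_{\mathbf r^{(n)}}$ with equal weights $1/n$ on the first $n$ spaces, then forming $[\ell_{\mathbf r^{(n)}},\ell_{\mathbf q^{S_{n+1}}}]_{1/(n+1)}$ redistributes the weights to $1/(n+1)$ across all $n+1$ spaces. At the end, the interpolated space $\ell_{\mathbf r}$ satisfies
$$
\frac1{r_i}=\frac1{\binom mk}\sum_{S\in\Pkm{k}}\frac1{q_i^S}.
$$
The (classical) log-convexity of mixed Lorentz norms — equivalent to iterated H\"older — gives the associated norm inequality with constant one and hence, upon iterating,
$$
\|a\|_{\ell_{\mathbf r}}\leq \prod_{S\in\Pkm{k}}\|a\|_{\ell_{\mathbf q^S}}^{1/\binom mk}.
$$

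The decisive combinatorial step is to check that $\mathbf r$ is the constant vector $\tfrac{2m}{m+1}$, so that $\ell_{\mathbf r}$ collapses to the plain $\ell_{2m/(m+1)}$ on the left-hand side. For fixed $i$, the index $i$ belongs to exactly $\binom{m-1}{k-1}$ of the sets $S\in\Pkm{k}$ and to exactly $\binom{m-1}{k}$ of their complements; using $\binom{m-1}{k-1}/\binom mk=k/m$ and $\binom{m-1}{k}/\binom mk=(m-k)/m$,
$$
\frac1{r_i}=\frac{k}{m}\cdot\frac{k+1}{2k}+\frac{m-k}{m}\cdot\frac12=\frac{k+1}{2m}+\frac{m-k}{2m}=\frac{m+1}{2m},
$$
independently of $i$. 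Combining this with the Minkowski-type bound on each factor finishes the argument.

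The main obstacle I expect is the passage from the two-space interpolation identity stated in the excerpt to the $\binom mk$-space version with equal weights and constant one. I plan to handle it by the iterative redistribution of weights described above; alternatively, for the single inequality needed, one could invoke directly the generalised H\"older inequality for mixed Lorentz norms, bypassing the full interpolation formalism. The combinatorial identity $\frac{1}{r_i}=\frac{m+1}{2m}$ is the cleanest conceptual point of the proof and is what makes the choice of exponent $\lambda=\tfrac{2k}{k+1}$ the correct one.
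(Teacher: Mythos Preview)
Your proposal is correct and follows essentially the same route as the paper: define the mixed exponents $\mathbf q^S$ with $q_i^S=\tfrac{2k}{k+1}$ on $S$ and $2$ off $S$, interpolate these $\binom{m}{k}$ Lorentz norms with equal weights to recover the diagonal exponent $\tfrac{2m}{m+1}$, and then bound each $\|a\|_{\ell_{\mathbf q^S}}$ via the Minkowski-type inequality. The only cosmetic difference is that the paper verifies $\tfrac{1}{r_i}=\tfrac{m+1}{2m}$ by a symmetry-and-double-sum argument rather than your direct count of how many $S$ contain a fixed $i$; both computations are equally short.
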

\begin{proof}
For $S\in\Pkm{k}$, let $\mathbf q^S$ be defined by $q_i^S=\frac{2k}{k+1}$ provided $i\in S$ and $q_i^S=2$ otherwise. Let also
$\mathbf q=(\frac{2m}{m+1},\dots,\frac{2m}{m+1})$ and $\theta=\frac 1{\binom mk}$. Then for any $i\in\{1,\dots,m\}$,
$$\frac{1}{q_i}=\sum_{S\in\Pkm{k}}\frac{\theta}{q_i^S}.$$
Indeed, by symmetry, $q_i=q_j$ for any $i\neq j$ and for any $S\in\Pkm{k}$,
$$\sum_{i=1}^m \frac{1}{q_i^S}=k\times\frac{k+1}{2k}+\frac{m-k}2=\frac{m+1}2.$$
Hence,
$$\frac{m}{q_i}=\sum_{j=1}^m\frac1{q_j}=\sum_{j=1}^m\sum_{S\in\Pkm{k}}\frac{\theta}{q_j^S}=\frac{m+1}2.$$
Thus, by interpolation,
$$\|a\|_{\ell_\mathbf q}\leq \prod_{S\in\Pkm{k}}\|a\|_{\ell_{\mathbf q^S}}^{\frac 1{\binom mk}}.$$
The left-hand side of this inequality is exactly $\left(\sum_{\bi} |a_{\bi}|^{\frac{2m}{m+1}}\right)^{\frac{m+1}{2m}}$ whereas, for any $S\in\Pkm{k}$,
$$\|a\|_{\ell_{\mathbf q}^S}\leq \left(\sum_{\bi_S}\left(\sum_{\bi_{\hat S}} |a_\bi|^2\right)^{\frac{1}2\times\frac{2k}{k+1}}\right)^{\frac{k+1}{2k}}.$$
\end{proof}
\begin{remark}
In Theorem \ref{THMBLEIGENERALIZED}, we can consider other exponents. The same proof shows that, if $p,q,s$ are bigger than $1$ with $q\geq s$ and
$$\frac ks+\frac{m-k}q=\frac mp,$$
then for all families of complex numbers $(a_{\bf i})_{\bi\in\Mmn}$,  we have
$$\left(\sum_{\bi \in\Mmn}|a_{\bi}|^{p}\right)^{\frac{1}{p}}\leq \prod_{S\in\Pkm{k}}\left(\sum_{\mathbf i_S}\left(
\sum_{\mathbf i_{\hat S}} |a_{\bi}|^q\right)^{\frac sq}\right)^{\frac{1}{s}\times\frac 1{\binom mk}}.$$
\end{remark}
Our interpolation arguments are also useful to prove the following variant of Blei's inequality, which was the starting point of \cite{def}.

\begin{theorem}
[Defant, Popa, Schwarting]\label{b} Let $A$ and $B$ be two finite non-void index
sets. Let $(a_{ij})_{(i,j)\in A\times B}$ be a scalar matrix with positive
entries, and denote its columns by $\alpha_{j}=(a_{ij})_{i\in A}$ and its rows
by $\beta_{i}=(a_{ij})_{j\in B}.$ Then, for $q,s_{1},s_{2}\geq1$ with
$q>\max(s_{1},s_{2})$ we have
\[
\left(  \sum_{(i,j)\in A\times B}a_{ij}^{w(s_{1},s_{2})}\right)  ^{\frac
{1}{w(s_{1},s_{2})}}\leq\left(  \sum_{i\in A}\left\Vert \beta_{i}\right\Vert
_{q}^{s_{1}}\right)  ^{\frac{f(s_{1},s_{2})}{s_{1}}}\left(  \sum_{j\in
B}\left\Vert \alpha_{j}\right\Vert _{q}^{s_{2}}\right)  ^{\frac{f(s_{2}%
,s_{1})}{s_{2}}},
\]
with
\begin{align*}
w  &  :[1,q)^{2}\rightarrow\lbrack0,\infty),\text{ }w(x,y):=\frac
{q^{2}(x+y)-2qxy}{q^{2}-xy},\\
f  &  :[1,q)^{2}\rightarrow\lbrack0,\infty),\text{ }f(x,y):=\frac{q^{2}%
x-qxy}{q^{2}(x+y)-2qxy}.
\end{align*}
\end{theorem}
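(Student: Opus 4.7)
The plan is to recognize the two right-hand-side quantities as mixed-norm expressions on the matrix $(a_{ij})$ and then to identify the left-hand side as an interpolation norm between them, so that everything will follow from the elementary inequality $\|a\|_{[X_0,X_1]_\theta}\leq \|a\|_{X_0}^{1-\theta}\|a\|_{X_1}^{\theta}$. Writing
\[
\left(\sum_{i\in A}\|\beta_i\|_q^{s_1}\right)^{1/s_1}=\|a\|_{\ell_{s_1}^A(\ell_q^B)},\qquad \left(\sum_{j\in B}\|\alpha_j\|_q^{s_2}\right)^{1/s_2}=\|a\|_{\ell_{s_2}^B(\ell_q^A)},
\]
the first expression already fits the Lorentz framework of Section \ref{SECBLEI} with the ordering (outer index in $A$, inner index in $B$), but in the second the roles of $i$ and $j$ are swapped, so the interpolation formula cannot be applied to them as stated.

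To bring both into a common ordering, I would apply Minkowski's inequality, valid since $s_2\leq q$, to obtain $\|a\|_{\ell_q^A(\ell_{s_2}^B)}\leq \|a\|_{\ell_{s_2}^B(\ell_q^A)}$. Setting $X_0=\ell_{s_1}^A(\ell_q^B)$ and $X_1=\ell_q^A(\ell_{s_2}^B)$, which have the same coordinate ordering, the interpolation formula from Section \ref{SECBLEI} gives $[X_0,X_1]_\theta=\ell_{r_1}^A(\ell_{r_2}^B)$ with
\[
\frac{1}{r_1}=\frac{1-\theta}{s_1}+\frac{\theta}{q},\qquad \frac{1}{r_2}=\frac{1-\theta}{q}+\frac{\theta}{s_2}.
\]

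The heart of the argument is then to choose $\theta$ so that $r_1=r_2$, which forces the interpolated space to collapse into $\ell_w(A\times B)$. A direct computation yields $\theta=s_2(q-s_1)/[q(s_1+s_2)-2s_1s_2]$, which is precisely $f(s_2,s_1)$; correspondingly $1-\theta=f(s_1,s_2)$, and substituting back gives $r_1=r_2=w(s_1,s_2)$. Feeding this into the interpolation inequality, and replacing $\|a\|_{X_1}$ by its Minkowski upper bound $\|a\|_{\ell_{s_2}^B(\ell_q^A)}$, yields exactly the stated estimate. The only genuine obstacle is the routine algebraic verification that the unique $\theta$ equating $r_1$ and $r_2$ is the one producing the precise exponents $f(s_1,s_2)$, $f(s_2,s_1)$ and $w(s_1,s_2)$ of the theorem; everything else is a direct application of the Minkowski and interpolation tools already developed in this section.
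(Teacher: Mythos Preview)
Your proposal is correct and follows essentially the same route as the paper: both interpolate between $\ell_{s_1}^A(\ell_q^B)$ and $\ell_q^A(\ell_{s_2}^B)$ with weights $(f(s_1,s_2),f(s_2,s_1))$ to land in $\ell_{w(s_1,s_2)}(A\times B)$, and then use Minkowski (since $q\geq s_2$) to pass from $\ell_q^A(\ell_{s_2}^B)$ to $\ell_{s_2}^B(\ell_q^A)$. The only cosmetic difference is that you introduce the Minkowski step before stating the interpolation, whereas the paper interpolates first and swaps the order of summation afterwards.
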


\begin{proof}
Let us consider the exponents $\left(  q,\ldots,q,s_{2},\ldots,s_{2}\right)
,\left(  s_{1},\ldots,s_{1},q,\ldots,q\right)  $ and $\left(  \theta
_{1},\theta_{2}\right)  =\left(  f(s_{2},s_{1}),f(s_{1},s_{2})\right)  $. Note
that $w\left(  s_{1},s_{2}\right)  $ is obtained by interpolating $\left(
s_{2},q\right)  $ and $\left(  q,s_{1}\right)  $ with $\theta_{1},\theta_{2}$,
respectively. Then%
\[
\left(  \sum_{(i,j)\in A\times B}a_{ij}^{w(s_{1},s_{2})}\right)  ^{\frac
{1}{w(s_{1},s_{2})}}\leq\left(  \sum_{i\in A}\left\Vert \beta_{i}\right\Vert
_{q}^{s_{1}}\right)  ^{\frac{f(s_{1},s_{2})}{s_{1}}}\left(  \sum_{i\in
A}\left\Vert \beta_{i}\right\Vert _{s_{2}}^{q}\right)  ^{\frac{f(s_{2},s_{1}%
)}{q}}.
\]
All that is left to prove is that the order of the last sum can be changed, but this is true because $q\geq s_{2}$.
\end{proof}

The above approach also stresses that these inequalities are just  particular
cases of a huge family of similar inequalities that can be proved by an
analogous interpolative procedure.

\section{The multilinear Bohnenblust-Hille inequalities}\label{SECMULTI}
We now investigate the multilinear Bohnenblust-Hille inequalities. An important tool to obtain them is Khintchine inequality.
Let $(\veps_i)$ be a sequence of independent Rademacher variables. Then, for any $p\in [1,2]$, there exists a constant
$\Arp{p}$ such that, for any $n\geq 1$ and any $a_1,\dots,a_n\in\mathbb R$,
$$\left(\sum_{i=1}^n |a_i|^2\right)^{1/2}\leq \mathrm A_{\mathbb R,p}\left(\int_{\Omega}\left|\sum_{i=1}^n a_i\veps_i(\omega)\right|^p d\omega\right)^{1/p}.$$
It has a complex counterpart: for any $p\in [1,2]$, there exists a constant
$\Acp{p}$ such that, for any $n\geq 1$ and any $a_1,\dots,a_n\in\mathbb C$,
$$\left(\sum_{i=1}^n |a_i|^2\right)^{1/2}\leq \mathrm A_{\mathbb C,p}\left(\int_{\TT^n}\left|\sum_{i=1}^n a_i z_i\right|^p dz\right)^{1/p}.$$
The best constant $\Arp{p}$ and $\Acp{p}$ are known (see \cite{Haa} and \cite{KoKw}). Indeed,
\begin{itemize}
\item $\Arp{p}=\frac1{\sqrt 2}\left(\frac{\Gamma\left(\frac{1+p}2\right)}{\sqrt \pi}\right)^{-1/p}$ if $p>p_0\approx 1.847$;
\item $\Acp{p}=\Gamma\left(\frac{p+2}2\right)^{-1/p}$ if $p\in (1,2]$.
\end{itemize}
Using Fubini's theorem and Minkowski's inequality (see, for instance, \cite[Lemma 2.2]{def} for the real case and \cite[Theorem 2.2]{jfa22} for the complex case), these inequalities have a multilinear version:
for any $n,m\geq 1$, for any family $(a_{\bi})_{\bi\in\Mmn}$ of real (resp. complex) numbers,
$$\left(\sum_{\bi\in\Mmn}^n |a_\bi|^2\right)^{1/2}\leq \mathrm A_{\mathbb R,p}^m \left(\int_{\Omega}\left|\sum_{\bi\in\Mmn} a_\bi\veps_{i_1}^{(1)}(\omega)\dots\veps_{i_m}^{(m)}(\omega)\right|^p d\omega\right)^{1/p}$$
where $(\veps_i^{(1)}),\dots,(\veps_i^{(m)})$ are independent sequences of independent Rademacher variables (resp.
$$\left(\sum_{\bi\in\Mmn}^n |a_\bi|^2\right)^{1/2}\leq \mathrm A_{\mathbb C,p}^m \left(\int_{\TT^{nm}}\left|\sum_{\bi\in\Mmn} a_\bi z_{i_1}^{(1)}\dots z_{i_m}^{(m)} \right|^p dz^{(1)}\dots dz^{(m)}\right)^{1/p},$$
in the complex case).

\medskip

We are ready to give an inductive formula for $\bmultkm$.
\begin{proposition}\label{PROPINDUCMULT}
For any $m\geq 2$, for any $1\leq k\leq m-1$,
$$\bmultkm\leq \Akp{\frac{2k}{k+1}}^{m-k}\bmultk{k}.$$
\end{proposition}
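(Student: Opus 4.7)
The plan is to combine the generalized Blei inequality (Theorem \ref{THMBLEIGENERALIZED}) with the multilinear Khintchine inequality and the $k$-linear Bohnenblust--Hille inequality, matching exponents around $p = \frac{2k}{k+1}$. Fix an $m$-linear form $L:c_0\times\cdots\times c_0\to\mathbb{K}$, set $a_\bi = L(e_{i_1},\dots,e_{i_m})$, and apply Theorem \ref{THMBLEIGENERALIZED} with parameter $k$. This reduces the task to estimating, for each $S\in\Pkm{k}$, the quantity
$$T_S := \left(\sum_{\bi_S}\left(\sum_{\bi_{\hat S}}|a_\bi|^2\right)^{\frac{k}{k+1}}\right)^{\frac{k+1}{2k}}$$
by $\Akp{p}^{m-k}\bmultk{k}\|L\|$, since the product of such bounds (each weighted by $1/\binom{m}{k}$) collapses to $\Akp{p}^{m-k}\bmultk{k}\|L\|$.

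To estimate $T_S$, I would fix $\bi_S$ and apply the $(m-k)$-variable form of Khintchine's inequality to the family $(a_\bi)_{\bi_{\hat S}}$, with exponent $p = \frac{2k}{k+1}\in[1,2]$. This yields
$$\left(\sum_{\bi_{\hat S}}|a_\bi|^2\right)^{\frac{k}{k+1}} \leq \Akp{p}^{(m-k)p} \int \left|\sum_{\bi_{\hat S}} a_\bi \prod_{j\in\hat S} z^{(j)}_{i_j}\right|^{p} dz,$$
where $z = (z^{(j)})_{j\in\hat S}$ is integrated against the appropriate Rademacher/Steinhaus measure. For each such $z$, the key observation is that
$$L_z(x^{(s_1)},\dots,x^{(s_k)}) := L(w^{(1)},\dots,w^{(m)}),\quad w^{(j)}=\begin{cases}x^{(j)} & j\in S \\ z^{(j)} & j\in\hat S\end{cases}$$
defines a continuous $k$-linear form on $c_0^k$ with $\|L_z\|\leq \|L\|$ (because $\|z^{(j)}\|_\infty=1$), and its coefficients at basis vectors are exactly $\sum_{\bi_{\hat S}} a_\bi\prod_{j\in\hat S} z^{(j)}_{i_j}$.

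Applying the $k$-linear Bohnenblust--Hille inequality to $L_z$ and raising to the $p$-th power gives
$$\sum_{\bi_S}\left|L_z(e_{i_{s_1}},\dots,e_{i_{s_k}})\right|^{p} \leq \bmultk{k}^{p}\|L\|^{p},$$
uniformly in $z$. Swapping sum and integral (Fubini) and combining with the Khintchine estimate yields
$$\sum_{\bi_S}\left(\sum_{\bi_{\hat S}}|a_\bi|^2\right)^{\frac{k}{k+1}} \leq \Akp{p}^{(m-k)p}\bmultk{k}^{p}\|L\|^{p},$$
so $T_S\leq \Akp{p}^{m-k}\bmultk{k}\|L\|$ after taking a $(1/p)$-th root. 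Plugging into Theorem \ref{THMBLEIGENERALIZED} gives the stated inductive bound.

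The only delicate point is the bookkeeping of exponents: one must verify that the exponent $p = \frac{2k}{k+1}$ at which Khintchine is applied is exactly the exponent at which the $k$-linear Bohnenblust--Hille inequality is valid, and that the $(m-k)$-fold iteration of the scalar Khintchine inequality contributes the factor $\Akp{p}^{m-k}$, which (after raising to $p$ and then taking a $(1/p)$-th root) survives intact to yield the claimed constant $\Akp{\frac{2k}{k+1}}^{m-k}$. This matching is essentially forced by the choice of exponent vectors $\mathbf q^S$ used in the proof of Theorem \ref{THMBLEIGENERALIZED}, so no new obstruction arises.
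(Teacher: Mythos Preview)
Your proposal is correct and follows essentially the same approach as the paper's own proof: reduce via Theorem~\ref{THMBLEIGENERALIZED}, apply the $(m-k)$-fold Khintchine inequality with exponent $\frac{2k}{k+1}$ to the $\hat S$-variables for each fixed $\bi_S$, then use the $k$-linear Bohnenblust--Hille inequality on the resulting $k$-linear form and Fubini to conclude. The only cosmetic difference is that the paper fixes a finite $n$ and works on $\mathbb C^n$ (treating only the complex case explicitly), while you phrase things directly on $c_0$; the argument and the exponent bookkeeping are otherwise identical.
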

\begin{proof}
We just consider the complex case. Let $n\geq 1$ and let $L=\sum_{\bi\in\Mmn}a_{\bi}z_{i_1}^{(1)}\dots z_{i_m}^{(m)}$
be an $m$-linear form on $\mathbb C^n$. By Theorem \ref{THMBLEIGENERALIZED}, it suffices to prove that, for any $S\in\Pkm{k}$,
$$\left(\sum_{S}\left(\sum_{\hat S}|a_{\bi}|^2\right)^{\frac 12\times \frac{2k}{k+1}}\right)^{\frac{k+1}{2k}}\leq \Acp{\frac{2k}{k+1}}^{m-k}\bmultc{k}\|L\|.$$
For the sake of clarity, we shall assume that $S=\{1,\dots,k\}$. For any $i_1,\dots,i_k\in\{1,\dots,n\}$, by the multilinear Khintchine inequality,
$$\left(\sum_{\hat S}|a_{\bi}|^2\right)^{\frac 12}\leq \Acp{\frac{2k}{k+1}}^{m-k}\left(\int_{\TT^{n(m-k)}}\left|\sum_{i_{k+1},\dots,i_m=1}^n a_{\bi} z_{i_{k+1}}^{(k+1)}\dots z_{i_m}^{(m)}\right|^{\frac{2k}{k+1}}dz^{(k+1)}\dots dz^{(m)}\right)^{\frac{k+1}{2k}}.$$
But for a fixed choice of $z^{(k+1)},\dots,z^{(m)}$, we know that
\begin{eqnarray*}
\sum_{i_1,\dots,i_k}\left|\sum_{i_{k+1},\dots,i_m}a_\bi z_{i_{k+1}}^{(k+1)}\dots z_{i_m}^{(m)}\right|^{\frac{2k}{k+1}}&\leq&\left(\bmultc{k}\sup_{z^{(1)},\dots,z^{(k)}\in\mathbb T^n}\left|\sum_{\bi}a_\bi z_{i_1}^{(1)}\dots z_{i_m}^{(m)}\right|\right)^{\frac{2k}{k+1}}\\
&\leq&\left(\bmultc{k}\|L\|\right)^{\frac{2k}{k+1}}.
\end{eqnarray*}
Hence,
\begin{eqnarray*}
\sum_{S}\left(\sum_{\hat S}|a_\bi|^2\right)^{\frac12\times\frac{2k}{k+1}}&\leq&\Acp{\frac{2k}{k+1}}^{(m-k)\frac{2k}{k+1}}\int_{\TT^{n(m-k)}}\sum_{i_1,\dots,i_k}\left|\sum_{i_{k+1},\dots,i_m}a_\bi z_{i_{k+1}}^{(k+1)}\dots z_{i_m}^{(m)}\right|^{\frac{2k}{k+1}}dz^{(k+1)}\dots dz^{(m)}\\
&\leq&\Acp{\frac{2k}{k+1}}^{(m-k)\frac{2k}{k+1}}\int_{\TT^{n(m-k)}}\left(\bmultc{k}\|L\|\right)^{\frac{2k}{k+1}}dz^{(k+1)}\dots dz^{(m)}\\
&\leq&\left(\Acp{\frac{2k}{k+1}}^{(m-k)}\bmultc{k}\|L\|\right)^{\frac{2k}{k+1}}.
\end{eqnarray*}
\end{proof}
When $m$ is even and $k=m/2$, we obtain
$$\bmultkm\leq \Akp{\frac{2m}{m+2}}^{m/2}\bmultk{m/2}.$$
This formula was used in \cite{jfalimite} to obtain the subpolynomial growth of $\bmultkm$. However, it seems unknown
if the sequence $(\bmultkm)$ is nondecreasing, and the method of \cite{jfalimite} was rather involved.
Using a better choice of $k$, we will get a much simpler proof with better estimates.
\begin{corollary}\label{CORMULTIC}
There exists $\kappa>0$ such that, for any $m\geq 1$,
$$\bmultcm\leq \kappa m^{\frac{1-\gamma}2}.$$
\end{corollary}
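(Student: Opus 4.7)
The plan is to iterate Proposition \ref{PROPINDUCMULT} with the choice $k = m-1$, reducing $\bmultcm$ to $\bmultc{m-1}$, and then all the way down to the trivial base case $\bmultc{1} = 1$. This gives the telescoping bound
$$\bmultcm \leq \prod_{k=1}^{m-1} \Acp{\tfrac{2k}{k+1}}.$$
Plugging in the explicit value $\Acp{p} = \Gamma(\tfrac{p+2}{2})^{-1/p}$ with $p = 2k/(k+1)$, so that $\tfrac{p+2}{2} = \tfrac{2k+1}{k+1} = 2 - \tfrac{1}{k+1}$, we obtain
$$\log \bmultcm \leq -\sum_{k=1}^{m-1} \frac{k+1}{2k}\,\log \Gamma\!\left(2 - \tfrac{1}{k+1}\right).$$

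The heart of the argument is the asymptotic evaluation of this sum. Since $\log \Gamma(2) = 0$ and $(\log \Gamma)'(2) = \psi(2) = 1 - \gamma$, a first-order Taylor expansion at $x = 2$ gives
$$\log \Gamma\!\left(2 - \tfrac{1}{k+1}\right) = -\frac{1-\gamma}{k+1} + O\!\left(\tfrac{1}{k^2}\right).$$
Multiplying by $-\tfrac{k+1}{2k}$ yields
$$-\frac{k+1}{2k}\,\log \Gamma\!\left(2 - \tfrac{1}{k+1}\right) = \frac{1-\gamma}{2k} + O\!\left(\tfrac{1}{k^2}\right).$$
Summing from $k = 1$ to $m-1$ and using $\sum_{k=1}^{m-1} \tfrac{1}{k} = \log m + \gamma + O(1/m)$ together with the convergence of $\sum 1/k^2$, we get
$$\log \bmultcm \leq \frac{1-\gamma}{2}\,\log m + C$$
for some absolute constant $C$. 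Exponentiating gives the claimed bound $\bmultcm \leq \kappa\, m^{(1-\gamma)/2}$.

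The only subtle point is making the error term estimate rigorous: we need a genuine (not merely asymptotic) inequality $\log \Gamma(2-h) \geq -(1-\gamma)h - C'h^2$ valid for all $h \in (0, 1/2]$, say. This follows by a Taylor-with-remainder argument on a fixed compact subinterval of $(1, 2]$ where $\log \Gamma$ is $C^2$, so the constant $C'$ is effective. All other ingredients (the induction from Proposition \ref{PROPINDUCMULT}, the base case $\bmultc{1} = 1$, and the explicit formula for $\Acp{p}$) are already at hand in the excerpt.
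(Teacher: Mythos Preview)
Your proof is correct and follows essentially the same route as the paper: apply Proposition \ref{PROPINDUCMULT} with $k=m-1$, telescope, and use the Taylor expansion of $\log\Gamma$ at $2$ (where $(\log\Gamma)'(2)=1-\gamma$) to identify the harmonic-sum growth $\tfrac{1-\gamma}{2}\log m$. Your treatment is in fact slightly more explicit than the paper's (which compresses the telescoping and the uniform error control into ``This easily yields''), and the closed-form product you write down is exactly the one the paper records after the corollaries.
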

Numerically, $\frac{1-\gamma}2\simeq 0.21392$
\begin{proof}
We apply Proposition \ref{PROPINDUCMULT} with $k=m-1$. Then
$$\bmultcm\leq \Gamma\left(2-\frac1 m\right)^{-\frac{m}{2m-2}}\bmultc{m-1}.$$
Now, $\Gamma(2)=1$ and $\Gamma'(2)=1-\gamma$, thus
\begin{eqnarray*}
\Gamma\left(2-\frac1 m\right)^{-\frac{m}{2m-2}}&=&\exp\left(\left(-\frac12+O\left(\frac 1m\right)\right)\log\left(1-\frac{1-\gamma}m+O\left(\frac 1{m^2}\right)\right)\right)\\
&=&\exp\left(\frac{1-\gamma}{2m}+O\left(\frac 1{m^2}\right)\right).
\end{eqnarray*}
This easily yields
$$\bmultcm\leq \kappa\exp\left(\frac{1-\gamma}2\log m\right)=\kappa m^{\frac{1-\gamma}2}.$$
\end{proof}
\begin{corollary}
There exists $\kappa>0$ such that, for any $m\geq 1$,
$$\bmultrm\leq \kappa m^{\frac{2-\ln 2-\gamma}2}.$$
\end{corollary}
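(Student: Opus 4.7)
The plan is to mimic the proof of Corollary \ref{CORMULTIC} verbatim, replacing the complex Khintchine constants by their real counterparts. I apply Proposition \ref{PROPINDUCMULT} with $k=m-1$ to obtain the one-step recurrence
$$\bmultrm \leq \Arp{2 - 2/m}\cdot \mathrm B^{\rm mult}_{\mathbb R,m-1}.$$
For $m$ large enough we have $p:=2-2/m > p_0 \approx 1.847$, so Haagerup's sharp formula applies and yields
$$\Arp{2 - 2/m} = \frac{1}{\sqrt 2}\left(\frac{\Gamma(3/2 - 1/m)}{\sqrt\pi}\right)^{-m/(2m-2)}.$$
The whole task then reduces to an asymptotic analysis of this quantity.

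For the Taylor expansion at $1/m = 0$, I use $\Gamma(3/2) = \sqrt\pi/2$ together with $\psi(3/2) = 2 - \gamma - 2\ln 2$ (which follows from the classical value $\psi(1/2) = -\gamma - 2\ln 2$ and the recursion $\psi(x+1) = \psi(x) + 1/x$). This gives
$$\frac{\Gamma(3/2 - 1/m)}{\sqrt\pi} = \frac{1}{2}\left(1 - \frac{2 - \gamma - 2\ln 2}{m} + O(1/m^2)\right).$$
Taking logarithms, multiplying by $-m/(2m-2)$, and combining with $\log(1/\sqrt 2) = -\tfrac12 \ln 2$, one checks that $-\tfrac12 \ln 2$ absorbs most of the $\tfrac{m\ln 2}{2m-2}$ contribution produced by the $1/2$ inside the bracket, leaving
$$\log \Arp{2-2/m} = \frac{2 - \ln 2 - \gamma}{2m} + O(1/m^2).$$

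Finally, iterating the recurrence from a fixed $m_0 \geq 14$ (so that Haagerup's formula is valid throughout),
$$\log \bmultrm \leq \log \mathrm B^{\rm mult}_{\mathbb R,m_0} + \sum_{j = m_0 + 1}^m \left(\frac{2-\ln 2-\gamma}{2j} + O(1/j^2)\right) = \frac{2 - \ln 2 - \gamma}{2}\log m + O(1),$$
since the harmonic sum satisfies $\sum_{j\leq m}\tfrac 1j = \log m + \gamma + o(1)$. Exponentiating yields the desired bound $\bmultrm \leq \kappa m^{(2-\ln 2-\gamma)/2}$. The only bookkeeping obstacle is tracking the cancellation between the outer $1/\sqrt 2$ and the factor $2^{m/(2m-2)} = \sqrt 2\bigl(1+O(1/m)\bigr)$ produced by the $1/2$ inside the Gamma expansion: without noticing this cancellation one would accumulate a spurious $\sqrt 2$ at each step, yielding an exponential rather than polynomial bound. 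Everything else is a routine Taylor expansion and harmonic-sum estimate.
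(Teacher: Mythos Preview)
Your proof is correct and follows exactly the approach the paper intends: apply Proposition~\ref{PROPINDUCMULT} with $k=m-1$, insert Haagerup's constant $\Arp{\frac{2m-2}{m}}=\frac{1}{\sqrt 2}\bigl(\Gamma(3/2-1/m)/\sqrt\pi\bigr)^{-m/(2m-2)}$ for large $m$, Taylor-expand using $\Gamma(3/2)=\sqrt\pi/2$ and $\Gamma'(3/2)=\frac{\sqrt\pi}{2}(2-2\ln 2-\gamma)$, and sum the resulting $\frac{2-\ln 2-\gamma}{2m}+O(1/m^2)$ terms. The paper's own proof is just the two-line remark ``completely similar'' together with these Gamma values; you have correctly carried out the details, including the key cancellation between the prefactor $1/\sqrt 2$ and the $2^{m/(2m-2)}$ arising from $\Gamma(3/2)/\sqrt\pi=1/2$.
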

Numerically, $\frac{2-\gamma-\ln 2}2\simeq 0.36481$.
\begin{proof}
The proof is completely similar, using that, for $m$ sufficiently large,
$$\Arp{\frac{2m-2}m}=\frac 1{\sqrt 2}\left(\frac{\Gamma\left(\frac 32-\frac 1m\right)}{\sqrt \pi}\right)^{-\frac m{2m-2}}$$
and that $\Gamma(3/2)=\frac{\sqrt \pi}2$, $\Gamma'(3/2)=\frac{\sqrt \pi}2(2-2\ln 2-\gamma)$.
\end{proof}

\begin{corollary}
$\limsup_{m\to+\infty}(\bmultkm-\bmultk{m-1})=0.$
\end{corollary}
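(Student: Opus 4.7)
The plan is to specialize Proposition~\ref{PROPINDUCMULT} to $k=m-1$, which gives the one-step recursion
$$\bmultkm \leq \Akp{\tfrac{2m-2}{m}}\,\bmultk{m-1}, \qquad\text{so that}\qquad \bmultkm - \bmultk{m-1} \leq \Bigl(\Akp{\tfrac{2m-2}{m}}-1\Bigr)\bmultk{m-1}.$$
The statement then reduces to showing that this upper bound tends to $0$, together with a matching lower bound on the $\limsup$.

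Both ingredients for the upper bound have essentially been computed in the preceding corollaries. The Taylor expansion carried out in the proof of Corollary~\ref{CORMULTIC} shows that $\Akp{\frac{2m-2}{m}}-1 = O(1/m)$: in the complex case this uses $\Gamma(2)=1$ and $\Gamma'(2)=1-\gamma$, and in the real case the analogous values of $\Gamma,\Gamma'$ at $3/2$. On the other hand, the subpolynomial bounds from Corollary~\ref{CORMULTIC} and its real counterpart give $\bmultk{m-1} = O(m^{c_\mathbb K})$ with $c_\mathbb K<1$. Multiplying, the right-hand side is $O(m^{c_\mathbb K -1}) \to 0$, so $\limsup_m(\bmultkm - \bmultk{m-1}) \leq 0$.

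For the reverse inequality I would argue by contradiction: if $\limsup_m(\bmultkm - \bmultk{m-1}) = -\delta < 0$, then for all $m$ sufficiently large we would have $\bmultkm - \bmultk{m-1}\leq -\delta/2$, which would force $\bmultkm$ to decrease linearly and eventually drop below the trivial lower bound $\bmultkm \geq 1$ (witnessed, for instance, by the form $L(x^{(1)},\dots,x^{(m)}) = x_1^{(1)}\cdots x_1^{(m)}$). This contradiction gives $\limsup \geq 0$, and combining the two inequalities yields equality. The whole argument is just a packaging of estimates already performed in this section, so I do not anticipate any real obstacle; the only mildly delicate point is the lower bound on the $\limsup$, which is handled by the trivial monotonicity argument above.
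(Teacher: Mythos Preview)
Your proof is correct and follows exactly the paper's approach: apply Proposition~\ref{PROPINDUCMULT} with $k=m-1$, then combine $\Akp{\frac{2m-2}{m}}-1=O(1/m)$ with the subpolynomial bound $\bmultk{m-1}=o(m)$ from the preceding corollaries. The only addition is your explicit treatment of the inequality $\limsup\geq 0$, which the paper leaves unstated (it is indeed immediate from $\bmultkm\geq 1$).
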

\begin{proof}
By Proposition \ref{PROPINDUCMULT},
$$\bmultkm-\bmultk{m-1}\leq \bmultk{m-1}\left(\Akp{\frac{2m-2}m}-1\right).$$
Now, $\bmultk{m-1}=o(m)$ whereas it is easy to check that $\Akp{\frac{2m-2}m}-1=O\left(\frac1m\right)$.
\end{proof}

The recursive formulas obtained above can be easily written as closed formulas. For instance,

\[
\bmultcm\leq%
{\displaystyle\prod\limits_{j=2}^{m}}
\Gamma\left(  2-\frac{1}{j}\right)  ^{\frac{j}{2-2j}}.%
\]

For the real case, \textit{mutatis mutandis}, a similar formula can be obtained.

\section{Other exponents}\label{SECOTHER}

In this section, we study the value of the best constant $\mathrm B_{\mathbb K,q_1,\dots,q_m}$ in (\ref{EQBHGENERALIZED}).
A multi-index $(q_1,\dots,q_m)$ with $\frac{1}{q_1}+\dots+\frac{1}{q_m}=\frac{m+1}2$ will be called a Bohnenblust-Hille exponent.

\medskip

The essence of Corollary \ref{CORMULTIC} is just the following: from the multilinear Khintchine inequality, we know that
the constant associated to the Bohnenblust-Hille exponent $\left(\frac{2m-2}m,\dots,\frac{2m-2}m,2\right)$ is dominated by
$\Acp{\frac{2m-2}m}\bmultc{m-1}$. Varying the position of the power 2 in $\left(\frac{2m-2}m,\dots,\frac{2m-2}m,2\right)$
we still have the upper bound $\Acp{\frac{2m-2}m}\bmultc{m-1}$. Interpolating the $n$ Bohnenblust-Hille exponents
 $\left(\frac{2m-2}m,\dots,\frac{2m-2}m,2,\frac{2m-2}m,\dots,\frac{2m-2}m\right)$ with $\theta_1=\dots=\theta_n$,
we obtain the Bohnenblust-Hille exponent $\left(\frac{2m}{m+1},\dots,\frac{2m}{m+1}\right)$ with the same constant $\Acp{\frac{2m-2}m}\bmultc{m-1}$.
Now, we can change the way we interpolate (by changing the values of $\theta_1,\dots,\theta_n$) and we can also put
several 2 instead of just one. This leads to the following.

\begin{proposition}
 For any $K>0$, there exists $\kappa>0$ such that, for any $m\geq 1$, for any Bohnenblust-Hille exponent $(q_1,\dots,q_m)$
with $\left|q_i-\frac{2m}{m+1}\right|\leq \frac Km$,
then
$$\mathrm B_{\mathbb C,q_1,\dots,q_m}\leq\kappa m^{\frac{1-\gamma}2}.$$
\end{proposition}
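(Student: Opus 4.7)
The plan is to realise the target Bohnenblust--Hille exponent as the output of an iterated complex interpolation between a family of ``basic'' Bohnenblust--Hille exponents, each of which already carries a constant of order $m^{(1-\gamma)/2}$, so that the interpolation inherits the same bound.

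\textbf{Step 1 (basic exponents).} For $k\in\{1,\dots,m-1\}$ and each $S\subset\{1,\dots,m\}$ with $|S|=k$, I would consider the exponent $\mathbf q^{(S)}$ defined by $q_i^{(S)}=2$ for $i\in S$ and $q_i^{(S)}=\alpha_k:=\frac{2(m-k)}{m-k+1}$ otherwise. This is a Bohnenblust--Hille exponent, and a straightforward adaptation of the proof of Proposition~\ref{PROPINDUCMULT} gives
$$\mathrm B_{\mathbb C,\mathbf q^{(S)}}\leq \Acp{\alpha_k}^{k}\bmultc{m-k},$$
uniformly in $S$: bound $\|a\|_{\ell_{\mathbf q^{(S)}}}$ by $\big(\sum_{\bi_{\hat S}}(\sum_{\bi_S}|a_{\bi}|^{2})^{\alpha_k/2}\big)^{1/\alpha_k}$ via the Minkowski-type inequality recalled in Section~\ref{SECBLEI}, apply the multilinear complex Khintchine inequality on the $k$ coordinates indexed by $S$, and conclude with the Bohnenblust--Hille inequality for the resulting $(m-k)$-linear form.

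\textbf{Step 2 (hypersimplex choice of weights).} Given $K>0$ and $(q_1,\dots,q_m)$ with $|q_i-\tfrac{2m}{m+1}|\leq K/m$, I would pick $k=\lceil mK/(K+2)\rceil+C$ for a suitable absolute constant $C$. The inequality $2mk\geq K(m+1)(m-k+1)$, equivalent to $\frac{2}{m-k+1}\geq \frac{2}{m+1}+\frac{K}{m}$, then yields $\alpha_k\leq q_i\leq 2$ for every $i$, provided $m$ exceeds a threshold depending only on $K$ (the finitely many smaller $m$ are absorbed into $\kappa$). Since $1/q_i$ lies in $[1/2,(m-k+1)/(2(m-k))]$ and $\sum_i 1/q_i=(m+1)/2$, the vector $(1/q_1,\dots,1/q_m)$ lies in the hypersimplex spanned by the $\binom{m}{k}$ vertices $(1/q_i^{(S)})_{|S|=k}$, so there are weights $\theta_S\geq 0$ with $\sum_S\theta_S=1$ and $\frac{1}{q_i}=\sum_S\frac{\theta_S}{q_i^{(S)}}$.

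\textbf{Step 3 (interpolation and asymptotics).} Iterated complex interpolation of the Lorentz spaces $\ell_{\mathbf q^{(S)}}$ with the weights $\theta_S$, along the lines used in the proof of Theorem~\ref{THMBLEIGENERALIZED}, produces
$$\mathrm B_{\mathbb C,q_1,\dots,q_m}\leq \prod_{|S|=k}\mathrm B_{\mathbb C,\mathbf q^{(S)}}^{\theta_S}\leq \Acp{\alpha_k}^{k}\bmultc{m-k}.$$
Corollary~\ref{CORMULTIC} gives $\bmultc{m-k}\leq \kappa_0(m-k)^{(1-\gamma)/2}\leq \kappa_0 m^{(1-\gamma)/2}$. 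For the other factor, using $\Acp{p}=\Gamma(\tfrac{p+2}{2})^{-1/p}=1+O(2-p)$ as $p\to 2^-$ and $2-\alpha_k=\frac{2}{m-k+1}=O((K+2)/m)$, one gets $\Acp{\alpha_k}^{k}=(1+O(1/m))^{O(m)}$, bounded by a constant that depends only on $K$. Multiplying the two estimates gives $\mathrm B_{\mathbb C,q_1,\dots,q_m}\leq \kappa\, m^{(1-\gamma)/2}$ as required.

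\textbf{Main obstacle.} The delicate point is the calibration of $k$ in Step 2: it must be large enough for the hypersimplex to contain $(1/q_1,\dots,1/q_m)$, yet leave $m-k$ growing linearly in $m$ so that Corollary~\ref{CORMULTIC} still delivers the $m^{(1-\gamma)/2}$ growth and $\Acp{\alpha_k}^{k}$ does not blow up. The choice $k\asymp mK/(K+2)$ threads this needle simultaneously in all three regimes (small $K$, moderate $K$, large $K$). Once that balance is struck, the hypersimplex description of the convex hull and the iterated complex interpolation bound are essentially formal.
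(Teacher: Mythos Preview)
Your proposal is correct and follows essentially the same route as the paper's proof: build ``basic'' Bohnenblust--Hille exponents with some coordinates equal to $2$ and the rest equal to $\frac{2j}{j+1}$ for $j$ a fixed fraction of $m$, bound their constants by $\Acp{\frac{2j}{j+1}}^{m-j}\bmultc{j}$ via Khintchine plus the $j$-linear inequality, and then interpolate to reach any target exponent with all $q_i\in\big[\frac{2j}{j+1},2\big]$. Your $k$ and the paper's $k$ are related by $k\leftrightarrow m-k$, and your explicit hypersimplex argument for the convex-hull step is exactly what the paper hides behind the phrase ``if we interpolate between these exponents''; otherwise the two arguments coincide.
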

\begin{proof} There is no loss of generality in working with sufficiently large values of $m$.
Let $\lambda \in(0,1)$ be such that, for any $m\geq 1$ large enough, one can find $k$ in $[\lambda m,m]$ such that
$$\frac{2m}{m+1}-\frac{2k}{k+1}>\frac Km.$$
By the multilinear Khintchine inequality, we know that
$$\mathrm B_{\mathbb C,\frac{2k}{k+1},\dots,\frac{2k}{k+1},2,\dots,2}\leq \Acp{\frac{2k}{k+1}}^{m-k}\bmultc{k},$$
where, in $\mathrm B_{\mathbb C,\frac{2k}{k+1},\dots,\frac{2k}{k+1},2,\dots,2}$, the exponent $\frac{2k}{k+1}$ appears $k$ times.
Since $k\geq\lambda m$, a rapid look at the value of $\Acp{\frac{2k}{k+1}}$ shows that $\Acp{\frac{2k}{k+1}}^{m-k}$ is bounded
by some constant which does not depend on $m$. Hence,
$$\mathrm B_{\mathbb C,\frac{2k}{k+1},\dots,\frac{2k}{k+1},2,\dots,2}\leq \kappa m^{\frac{1-\gamma}2}.$$
We still keep the same upper bound for every Bohnenblust-Hille exponent $(q_1,\dots,q_m)$ with
$q_i\in\frac{2k}{k+1}$ for $k$ values of $i$ and $q_i=2$ for the $(m-k)$ other values of $i$. Now, if we interpolate
between these exponents, we get
$$\mathrm B_{\mathbb C,q_1,\dots,q_m}\leq\kappa m^{\frac{1-\gamma}2}$$
for every Bohnenblust-Hille coefficient $(q_1,\dots,q_m)$ with $q_i\in\left[\frac{2k}{k+1},2\right]$.
The proposition then follows straightforwardly.
\end{proof}

\section{The polynomial Bohnenblust-Hille inequality is subexponential}\label{SECPOL}
Let us turn to the polynomial Bohnenblust-Hille inequality. We need a polynomial version of the Khintchine inequality.
It can be found in \cite[Theorem 9]{BAYMONAT}.
\begin{lemma}\label{LEMMONAT}
 Let $p\in[1,2]$. For every $m$-homogeneous polynomial $\sum_{|\alpha|=m}a_{\alpha}z^{\alpha}$ on $\mathbb C^n$,
we have
$$\left(\sum_{|\alpha|=m}|a_\alpha|^2\right)^{1/2}\leq\left(\frac 2p\right)^{m/2}\left\|\sum_{|\alpha|=m}a_\alpha z^{\alpha}\right\|_{L^p(\TT^n)}.$$
\end{lemma}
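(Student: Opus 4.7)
Plan:

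By Plancherel's theorem on $\TT^n$ (the monomials $\{z^\alpha:|\alpha|=m\}$ are orthonormal in $L^2(\TT^n)$), the left-hand side of the claim equals $\|P\|_{L^2(\TT^n)}$, so the lemma is equivalent to the hypercontractive estimate
$$\|P\|_{L^2(\TT^n)}\leq\left(\frac{2}{p}\right)^{m/2}\|P\|_{L^p(\TT^n)}$$
valid for every $m$-homogeneous polynomial $P$ on $\mathbb{C}^n$ and every $p\in[1,2]$. Since $\Acp{p}=\Gamma((p+2)/2)^{-1/p}\leq\sqrt{2/p}$ on $[1,2]$, the prefactor $(2/p)^{m/2}$ is exactly an upper bound for $\Acp{p}^m$, which strongly suggests an $m$-fold iteration of the complex (Steinhaus) Khintchine inequality.

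I would proceed by induction on $m$. The base case $m=1$ is precisely the complex Khintchine inequality applied to $P(z)=\sum_{i=1}^n a_iz_i$, giving
$$\|P\|_{L^2(\TT^n)}=\Big(\sum_{i=1}^n|a_i|^2\Big)^{1/2}\leq\Acp{p}\|P\|_{L^p(\TT^n)}\leq\sqrt{2/p}\,\|P\|_{L^p(\TT^n)}.$$
For the inductive step, the natural strategy is to ``peel off'' one degree at a time. Concretely, decompose $P$ along a chosen variable (say $z_n$) as $P(z)=\sum_{k=0}^{m}z_n^kQ_k(z_1,\ldots,z_{n-1})$ with each $Q_k$ being $(m-k)$-homogeneous; by Parseval in $z_n$ and Fubini, $\|P\|_{L^2(\TT^n)}^2=\sum_k\|Q_k\|_{L^2(\TT^{n-1})}^2$, to which the inductive hypothesis can be applied termwise.

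The main obstacle is the recombination: one needs a one-variable hypercontractive estimate that recovers $\|P\|_{L^p(\TT^n)}$ from the weighted sum $\sum_k(p/2)^k\|Q_k\|_{L^p(\TT^{n-1})}^2$ with the sharp constant $(2/p)^{m/2}$. This is delicate because the powers $z_n,z_n^2,\ldots,z_n^m$ are not independent Steinhaus variables, so a direct Khintchine inequality in $z_n$ alone is unavailable. A naive route through Weissler's hypercontractivity only yields the inferior constant $(p-1)^{-m/2}$, which diverges as $p\to 1$, while the target stays bounded by $2^{m/2}$; extracting precisely one factor of $\sqrt{2/p}$ per degree — which is what makes the constant sharp — is exactly the content of \cite[Theorem~9]{BAYMONAT}, whose argument exploits the independence of the distinct torus coordinates together with a symmetrization tied to the homogeneity of $P$ to iterate the complex Khintchine inequality $m$ times.
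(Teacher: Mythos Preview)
The paper does not prove this lemma; it simply quotes it as \cite[Theorem~9]{BAYMONAT} and uses it as a black box in the proof of Theorem~\ref{THMPOLMULT}. Your proposal ends at the same place: after the correct Plancherel reduction $\big(\sum_{|\alpha|=m}|a_\alpha|^2\big)^{1/2}=\|P\|_{L^2(\TT^n)}$ and the correct base case $m=1$ via the complex Khintchine inequality, you explicitly flag the inductive recombination as the real difficulty and then defer to the very reference the paper cites. There is therefore nothing to compare---both the paper and you treat the statement as an imported result rather than supply an argument.

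One remark on the sketch itself: the decomposition $P=\sum_k z_n^k Q_k$ along a single coordinate does not by itself yield a factor $\sqrt{2/p}$ per degree, for exactly the reason you state (the powers $z_n,z_n^2,\dots,z_n^m$ are not independent Steinhaus variables). Your closing sentence already points toward the correct mechanism---independence of the distinct torus coordinates combined with a homogeneity-based symmetrization---so you have correctly located where the work lies, but the proposal as written is an outline plus a citation, not a proof.
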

We start from $P=\sum_{|\alpha|=m}a_\alpha z^\alpha$ an $m$-homogeneous polynomial on $\mathbb C^n$. We shall also write it
$$P(z)=\sum_{\bi\in\Jmn}c_\bi z_{i_1}\dots z_{i_m}.$$ There exists a symmetric $m$-multilinear form $L:\mathbb C^n\times\dots\times\mathbb C^n\to\mathbb C$
so that $L(z,\dots,z)=P(z)$. To define $L$, we need to introduce some standard notations. For indices $\bi,\mathbf j\in\Mmn$, the notation
$\bi\sim\mathbf j$ means that there is a permutation $\sigma$ of the set $\{1,\dots,m\}$ such that $i_{\sigma(k)}=j_k$ for every $k=1,\dots,m$.
For a given index $\bi$, we denote by $[\bi]$ the equivalence class of all indices $\mathbf j$ such that $\mathbf j\sim\bi$.
Moreover, let $|\bi|$ denote the cardinality of $[\bi]$.  Note that for each $\bi\in\Mmn$, there is a unique $\mathbf j\in\Jmn$ with $[\bi]=[\mathbf j]$.

The symmetric $m$-multilinear form $L$ is then defined by
$$L(z^{(1)},\dots,z^{(m)})=\sum_{\bi\in\Mmn}\frac{c_{[\bi]}}{|\bi|}z_{i_1}^{(1)}\cdots z_{i_m}^{(m)}.$$
The norm of $L$ is controlled by $\frac{m^m}{m!}\|P\|_\infty$. We will need a more precise control, when we evaluate $L$ on products of diagonals. By a result of Harris (\cite{Ha72}),
for any nonnegative integers $m_1,\dots,m_k$ with $m_1+\dots+m_k=m$, for any $z^{(1)},\dots,z^{(k)}\in\mathbb T^n$,
$$|L(\underbrace{z^{(1)},\dots,z^{(1)}}_{m_1},\dots,\underbrace{z^{(k)},\dots,z^{(k)}}_{m_k})|\leq\frac{m_1!\cdots m_k!}{m_1^{m_1}\cdots m_k^{m_k}}\times\frac{m^m}{m!}\|P\|_\infty.$$

Our main step is the following inductive inequality linking
$\bpolcm$ and $\bmultcm$.
\begin{theorem}\label{THMPOLMULT}
 For any $m\geq 2$, for any $1\leq k\leq m-1$,
$$\bpolcm\leq \left(1+\frac 1k\right)^{\frac{m-k}2}\times\frac{m^m}{(m-k)^{m-k}}\times\left(\frac{(m-k)!}{m!}\right)^{1/2}\bmultc{k}.$$
\end{theorem}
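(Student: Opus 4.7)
The plan is to reduce the polynomial case to the $k$-multilinear one via a slicing procedure, in direct analogy with Proposition~\ref{PROPINDUCMULT}. Let $L$ be the symmetric $m$-linear form with $L(z,\ldots,z)=P(z)$, and for each $\mathbf j\in\mathcal M(k,n)$ introduce the $(m-k)$-homogeneous auxiliary polynomial
$$Q_{\mathbf j}(z) = L\bigl(z,\ldots,z,e_{j_1},\ldots,e_{j_k}\bigr).$$

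First I would apply Lemma~\ref{LEMMONAT} with $p=2k/(k+1)$ to each $Q_{\mathbf j}$, obtaining
$$\Bigl(\sum_{|\beta|=m-k}|(Q_{\mathbf j})_\beta|^2\Bigr)^{1/2}\le \bigl(1+\tfrac{1}{k}\bigr)^{(m-k)/2}\|Q_{\mathbf j}\|_{L^{2k/(k+1)}(\TT^n)}.$$
Raising to the power $2k/(k+1)$, summing over $\mathbf j\in\mathcal M(k,n)$, and exchanging sum and integral by Fubini reduces matters to estimating $\sum_{\mathbf j}|L(z,\ldots,z,e_{\mathbf j})|^{2k/(k+1)}$ pointwise in $z\in\TT^n$. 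For each such $z$, the map $(w^{(1)},\ldots,w^{(k)})\mapsto L(z,\ldots,z,w^{(1)},\ldots,w^{(k)})$ is a $k$-linear form whose norm, by Harris's inequality applied with block structure $(m-k,1,\ldots,1)$, is at most $\tfrac{(m-k)!}{(m-k)^{m-k}}\cdot\tfrac{m^m}{m!}\|P\|_\infty$. The multilinear Bohnenblust--Hille inequality (with constant $\bmultc{k}$) then bounds the inner sum uniformly in $z$, and taking $(k+1)/(2k)$-th roots yields the mixed-norm estimate
$$\biggl(\sum_{\mathbf j}\Bigl(\sum_{\beta}|(Q_{\mathbf j})_\beta|^2\Bigr)^{k/(k+1)}\biggr)^{(k+1)/(2k)}\le \bigl(1+\tfrac{1}{k}\bigr)^{(m-k)/2}\bmultc{k}\,\frac{(m-k)!}{(m-k)^{m-k}}\cdot\frac{m^m}{m!}\,\|P\|_\infty.$$

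The remaining, and most delicate, step is to translate this mixed-norm bound into the polynomial Bohnenblust--Hille quantity $\bigl(\sum_\alpha|a_\alpha|^{2m/(m+1)}\bigr)^{(m+1)/(2m)}$. A direct expansion shows that
$$(Q_{\mathbf j})_\beta=\frac{\gamma!\,\prod_i\binom{\alpha_i}{\gamma_i}}{k!\,\binom{m}{k}}\,a_\alpha,\qquad\alpha=\beta+\gamma,\ \gamma\text{ the multi-index of }\mathbf j.$$
Combined with the multiplicity $k!/\gamma!$ of tuples in $\mathcal M(k,n)$ realizing a given $\gamma$, and with the Vandermonde identity $\sum_{|\gamma|=k,\,\gamma\le\alpha}\prod_i\binom{\alpha_i}{\gamma_i}=\binom{m}{k}$, a Cauchy--Schwarz step produces the additional factor $\sqrt{m!/(m-k)!}$; this absorbs the $(m-k)!/m!$ still present in the Harris piece, collapsing the prefactor to the claimed $\sqrt{(m-k)!/m!}$. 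The hard point is precisely this final combinatorial Cauchy--Schwarz, which must be arranged tightly enough not to insert an $n$-dependent constant or a worse factorial; the preceding Khintchine--Harris--multilinear-Bohnenblust--Hille chain is a straightforward polynomial counterpart of the argument used for Proposition~\ref{PROPINDUCMULT}.
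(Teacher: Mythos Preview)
Your analytic pipeline---polynomial Khintchine (Lemma~\ref{LEMMONAT}) on each slice $Q_{\mathbf j}$, Fubini, multilinear Bohnenblust--Hille on the $k$-linear form, and Harris's estimate for the norm---is exactly what the paper does, and your mixed-norm bound
\[
\biggl(\sum_{\mathbf j\in\mathcal M(k,n)}\Bigl(\sum_{\beta}|(Q_{\mathbf j})_\beta|^2\Bigr)^{k/(k+1)}\biggr)^{(k+1)/(2k)}\le \Bigl(1+\tfrac{1}{k}\Bigr)^{(m-k)/2}\bmultc{k}\,\frac{(m-k)!}{(m-k)^{m-k}}\cdot\frac{m^m}{m!}\,\|P\|_\infty
\]
is correct.

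The gap is in the step you yourself flag as ``the hard point''. Passing from this mixed $\ell_{2k/(k+1)}(\ell_2)$ norm back to $\bigl(\sum_\alpha|a_\alpha|^{2m/(m+1)}\bigr)^{(m+1)/(2m)}$ is \emph{not} a Cauchy--Schwarz/Vandermonde computation; it is exactly a Blei-type interpolation, and the paper's proof hinges on Theorem~\ref{THMBLEIGENERALIZED} at this point. Concretely, the paper first rewrites
\[
\sum_{|\alpha|=m}|a_\alpha|^{\frac{2m}{m+1}}\le\sum_{\bi\in\Mmn}\Bigl(\frac{|c_{[\bi]}|}{|\bi|^{1/2}}\Bigr)^{\frac{2m}{m+1}},
\]
applies Theorem~\ref{THMBLEIGENERALIZED} to the array $c_{[\bi]}/|\bi|^{1/2}$, and then uses the elementary multiplicity bound $|\bi|\le |\bi_{\hat S}|\cdot m(m-1)\cdots(m-k+1)$ to convert the weight $|\bi|^{-1}$ into $|\bi|^{-2}|\bi_{\hat S}|$; this is what produces the factor $\sqrt{m!/(m-k)!}$. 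After that, the inner $\ell_2$ sum is identified (exactly, not via an inequality) with $\|P_{\bi_S}\|_2^2=\sum_\beta|(Q_{\bi_S})_\beta|^2$. No Vandermonde identity and no further Cauchy--Schwarz are involved.

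Your sketch tries to reverse-engineer this bound directly from the coefficient formula for $(Q_{\mathbf j})_\beta$, but Cauchy--Schwarz alone cannot manufacture an inequality between an $\ell_{2m/(m+1)}$ norm and a mixed $\ell_{2k/(k+1)}(\ell_2)$ norm; that is the content of Theorem~\ref{THMBLEIGENERALIZED} (for general arrays one genuinely needs all $\binom{m}{k}$ choices of $S$ and interpolation---here symmetry of $L$ makes them all equal, but the interpolation inequality itself is still required). So the missing idea is precisely the generalized Blei inequality, applied to the symmetrized coefficient array before the analytic steps, together with the one-line multiplicity bound that yields $\sqrt{m!/(m-k)!}$.
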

\begin{proof}
 Keeping the same notations,
\begin{eqnarray*}
\sum_{|\alpha|=m}|a_\alpha|^{\frac{2m}{m+1}}&=&\sum_{\bi\in\Jmn}|c_{\bi}|^{\frac{2m}{m+1}}\\
&=&\sum_{\bi\in\Mmn}|\bi|^{-\frac1{m+1}}\left(\frac{|c_{[\bi]}|}{|\bi|^{1/2}}\right)^{\frac{2m}{m+1}}\\
&\leq&\sum_{\bi\in\Mmn} \left(\frac{|c_{[\bi]}|}{|\bi|^{1/2}}\right)^{\frac{2m}{m+1}}.
\end{eqnarray*}
We then apply Theorem \ref{THMBLEIGENERALIZED} to obtain
$$\left(\sum_{|\alpha|=m}|a_\alpha|^{\frac{2m}{m+1}}\right)^{\frac{m+1}{2m}}\leq\prod_{S\in\Pkm{k}}\left(\sum_{\bi_S}\left(\sum_{\bi_{\hat S}}\frac{|c_{[\bi]}|^2}{|\bi|}\right)^{\frac12\times\frac{2k}{k+1}}\right)^{\frac{k+1}{2k}\times\frac1{\binom mk}}.$$
It is easy to check that for any $S\in\Pkm{k}$, $\frac{|\bi|}{|\bi_{\hat S}|}\leq m(m-1)\cdots (m-k+1)$. Thus,
$$\left(\sum_{|\alpha|=m}|a_\alpha|^{\frac{2m}{m+1}}\right)^{\frac{m+1}{2m}}\leq\big(m(m-1)\cdots(m-k+1)\big)^{1/2}\prod_{S\in\Pkm{k}}\left(\sum_{\bi_S}\left(\sum_{\bi_{\hat S}}\frac{|c_{[\bi]}|^2}{|\bi|^2}|\bi_{\hat S}|\right)^{\frac12\times\frac{2k}{k+1}}\right)^{\frac{k+1}{2k}\times\frac1{\binom mk}}.$$
We fix some $S\in\Pkm{k}$. As before, we may and shall assume for the sake of clarity that $S=\{1,\dots,k\}$. We then fix some $\bi_S\in\mathcal M(k,n)$ and we introduce
the following $(m-k)$-homogeneous polynomial on $\mathbb C^n$:
$$P_{\bi_S}(z)=L(e_{i_1},\dots,e_{i_k},z,\dots,z).$$
Observe that
\begin{eqnarray*}
 P_{\bi_S}(z)&=&\sum_{\bi_{\hat S}\in\mathcal M(m-k,n)} \frac{c_{[\bi]}}{|\bi|}z_{i_{k+1}}\cdots z_{i_m}\\
&=&\sum_{\bi_{\hat S}\in\mathcal J(m-k,n)}\frac{c_{[\bi]}}{|\bi|} |\bi_{\hat S}| z_{i_{k+1}}\cdots z_{i_m}
\end{eqnarray*}
so that
\begin{eqnarray*}
 \|P_{\bi_S}\|_2&=&\left(\sum_{\bi_{\hat S}\in\mathcal J(m-k,n)} \frac{|c_{[\bi]}|^2}{|\bi|^2}|\bi_{\hat S}|^2\right)^{1/2}\\
&=&\left(\sum_{\bi_{\hat S}\in\mathcal M(m-k,n)} \frac{|c_{[\bi]}|^2}{|\bi|^2}|\bi_{\hat S}|\right)^{1/2}.
\end{eqnarray*}
By Lemma \ref{LEMMONAT},
$$\|P_{\bi_S}\|_2^{\frac{2k}{k+1}}\leq \left(1+\frac 1k\right)^{\frac{(m-k)k}{k+1}}\int_{\mathbb T^{n}}\big|L(e_{i_1},\dots,e_{i_k},z,\dots,z)\big|^{\frac{2k}{k+1}}dz.$$
This leads to
$$\sum_{\bi_S}\left(\sum_{\bi_{\hat S}}\frac{|c_{[\bi]}|^2}{|\bi|^2}|\bi_{\hat S}|\right)^{\frac12\times\frac{2k}{k+1}}\leq \left(1+\frac 1k\right)^{\frac{(m-k)k}{k+1}}\int_{\mathbb T^{n}}
\sum_{\bi_S}\big|L(e_{i_1},\dots,e_{i_k},z,\dots,z)\big|^{\frac{2k}{k+1}}dz.$$
Now, for a fixed $z\in\mathbb T^n$, we may apply the multilinear Bohnenblust-Hille inequality to the $k$-multilinear form
$$(z^{(1)},\dots,z^{(k)})\mapsto L(z^{(1)},\dots,z^{(k)},z,\dots,z).$$
Then
\begin{eqnarray*}
 \sum_{\bi_S}\big|L(e_{i_1},\dots,e_{i_k},z,\dots,z)\big|^{\frac{2k}{k+1}} &\leq&\left(\bmultc{k} \sup_{w^{(1)},\dots,w^{(k)}\in\mathbb T^n} \big|L(w^{(1)},\dots,w^{(k)},z,\dots,z)\big|\right)^{\frac{2k}{k+1}}\\
&\leq&\left(\bmultc{k}\times\frac{(m-k)!}{(m-k)^{m-k}}\times\frac{m^m}{m!}\|P\|_\infty\right)^{\frac{2k}{k+1}}.
\end{eqnarray*}
Summarizing all the previous estimates, we finally get
$$\left(\sum_{|\alpha|=m}|a_\alpha|^{\frac{2m}{m+1}}\right)^{\frac{m+1}{2m}}\leq \big(m(m-1)\cdots(m-k+1)\big)^{1/2} \left(1+\frac 1k\right)^{\frac{m-k}{2}}\frac{(m-k)!m^m}{(m-k)^{m-k}m!}\bmultc{k}
\|P\|_\infty$$
as announced.
\end{proof}
When $k=1$, we recover the inequality of \cite{DFOOS10}. Observe also that the case $k=m$ would correspond to a direct use of the polarization inequality. Our task now is to find the best value of $k$.
\begin{corollary}\label{CORPOL1}
 For any $\veps>0$, there exists $\kappa>0$ such that, for any $m\geq 1$,
$$\bpolcm\leq \kappa(1+\veps)^m.$$
\end{corollary}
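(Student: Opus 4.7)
The plan is to apply Theorem \ref{THMPOLMULT} with a fixed value $k=k_0$ depending on $\veps$ but not on $m$. This is different from the more subtle choice needed for the subexponential bound of Theorem \ref{THMMAINPOL}; since we only need pure exponential control $(1+\veps)^m$, a fixed $k_0$ turns out to suffice, and we do not even need to invoke the subpolynomial estimate on $\bmultcm$.

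Given $\veps>0$, first pick $\eta>0$ small enough that $(1+\eta)^2\leq 1+\veps$, and then choose an integer $k_0\geq 1/((1+\eta)^2-1)$, so that $\sqrt{1+1/k_0}\leq 1+\eta$. For $m>k_0$, Theorem \ref{THMPOLMULT} with $k=k_0$ gives
$$\bpolcm\leq \left(1+\tfrac{1}{k_0}\right)^{(m-k_0)/2}\cdot\frac{m^m}{(m-k_0)^{m-k_0}}\cdot\sqrt{\frac{(m-k_0)!}{m!}}\cdot\bmultc{k_0}.$$
By the choice of $k_0$, the first factor is bounded by $(1+\eta)^m$. Since $k_0$ is fixed, a direct computation yields
$$\frac{m^m}{(m-k_0)^{m-k_0}}=m^{k_0}\left(1+\tfrac{k_0}{m-k_0}\right)^{m-k_0}\sim e^{k_0}m^{k_0},\qquad \sqrt{\frac{(m-k_0)!}{m!}}=\frac{1}{\sqrt{m(m-1)\cdots(m-k_0+1)}}\sim m^{-k_0/2},$$
so the product of the middle two factors is $O(m^{k_0/2})$. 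Finally, $\bmultc{k_0}$ is a constant.

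Combining, $\bpolcm\leq C\cdot m^{k_0/2}\cdot(1+\eta)^m$ for some constant $C=C(\veps)$. Since any polynomial in $m$ is dominated by $(1+\eta)^m$ up to a multiplicative constant depending on $\eta$ and $k_0$, we obtain $\bpolcm\leq \kappa(1+\eta)^{2m}\leq \kappa(1+\veps)^m$ for $m>k_0$; the finitely many remaining values of $m$ can be absorbed into $\kappa$. There is no real obstacle in this argument — the only input is the right choice of $k_0$ and a routine Stirling-type estimate for the middle factors.
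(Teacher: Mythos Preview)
Your proof is correct and follows essentially the same approach as the paper: fix $k$ depending only on $\veps$, apply Theorem~\ref{THMPOLMULT}, bound $(1+1/k)^{(m-k)/2}$ exponentially, and observe that the remaining factor is $O(m^{k/2})$ for this fixed $k$. The only difference is cosmetic --- you introduce an auxiliary $\eta$ to make explicit how the polynomial $m^{k_0/2}$ is absorbed into the exponential, whereas the paper leaves that step implicit.
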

\begin{proof}
 Let $k\geq 1$ be such that $\frac1k\leq\veps$. Then the result follows from
$$\left(1+\frac 1k\right)^{\frac{m-k}2}\leq (1+\veps)^{m/2} \quad \text{ and }  \quad \frac{m^m}{(m-k)^{m-k}}\times\left(\frac{(m-k)!}{m!}\right)^{1/2}=O(m^{k/2}).$$
\end{proof}
\begin{corollary}
 For any $\kappa_2>1$, there exists $\kappa_1>0$ such that
$$\bpolcm\leq \kappa_1\exp(\kappa_2 \sqrt{m\log m}).$$
\end{corollary}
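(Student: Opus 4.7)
The plan is to apply Theorem \ref{THMPOLMULT} with $k$ chosen as a function of $m$, and to control $\bmultc{k}$ via Corollary \ref{CORMULTIC}. Write the bound from Theorem \ref{THMPOLMULT} as a product of three factors: the \emph{hypercontractive factor} $A_k := (1+1/k)^{(m-k)/2}$, the \emph{combinatorial factor} $B_k := \frac{m^m}{(m-k)^{m-k}}\sqrt{(m-k)!/m!}$, and the \emph{multilinear factor} $C_k := \bmultc{k}$. The slow subexponential growth will come from balancing $A_k$ (which wants $k$ large) against $B_k$ (which wants $k$ small).

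Next, I estimate each factor. Using $\log(1+1/k)\leq 1/k$ gives $\log A_k \leq (m-k)/(2k) \leq m/(2k)$. For $B_k$, I split $m^m/(m-k)^{m-k} = (m/(m-k))^{m-k}\cdot m^k \leq e^k\,m^k$ (the first bound is the standard $(1+x/n)^n\leq e^x$), and bound $\sqrt{(m-k)!/m!} = (m(m-1)\cdots(m-k+1))^{-1/2}\leq (m-k+1)^{-k/2}$; multiplying gives $B_k\leq e^k\,m^k(m-k+1)^{-k/2}$, which for $k\leq m/2$ is at most $(\sqrt{2}\,e)^k m^{k/2}$. Finally, Corollary \ref{CORMULTIC} gives $C_k\leq \kappa\,k^{(1-\gamma)/2}$, which is polynomial in $k$. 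Taking logarithms,
\[
\log \bpolcm \;\leq\; \frac{m}{2k} \;+\; \frac{k}{2}\log m \;+\; O(k) \;+\; O(\log m).
\]

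Now I optimize $k$. The two leading terms $m/(2k)$ and $(k/2)\log m$ are equal exactly when $k=\sqrt{m/\log m}$, in which case each equals $\tfrac12\sqrt{m\log m}$, so their sum is $\sqrt{m\log m}$. Choosing $k=\lceil\sqrt{m/\log m}\,\rceil$ (which is $\leq m-1$ for $m$ large enough, so Theorem \ref{THMPOLMULT} applies), the $O(k)=O(\sqrt{m/\log m})$ and $O(\log m)$ terms are both $o(\sqrt{m\log m})$. Therefore
\[
\log \bpolcm \;\leq\; \sqrt{m\log m}\,(1+o(1)).
\]

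To finish, fix $\kappa_2>1$. Since $(\kappa_2-1)\sqrt{m\log m}\to+\infty$ dominates the $o(\sqrt{m\log m})$ correction, there exists $m_0$ beyond which $\log \bpolcm\leq\kappa_2\sqrt{m\log m}$; the finitely many small values of $m$ are absorbed into a constant $\kappa_1$. The only delicate step is checking that the combinatorial factor $B_k$ really contributes only $e^k m^{k/2}$ up to constants — i.e.\ that the polarization loss hidden in Harris's estimate is cancelled by the $\sqrt{(m-k)!/m!}$ that arises from the $\ell_2$-norm of the diagonal polynomial $P_{\bi_S}$ — but this is exactly the refinement already performed in the proof of Theorem \ref{THMPOLMULT}, so the remaining work is routine asymptotic bookkeeping.
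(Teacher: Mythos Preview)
Your proof is correct and follows essentially the same route as the paper: the same choice $k\sim\sqrt{m/\log m}$ in Theorem~\ref{THMPOLMULT}, the same bound $\log A_k\leq m/(2k)$, and the same leading term $\tfrac{k}{2}\log m$ for the combinatorial factor, yielding $\log\bpolcm\leq \sqrt{m\log m}\,(1+o(1))$. The only cosmetic difference is that you bound $B_k$ via the elementary inequalities $(1+\tfrac{k}{m-k})^{m-k}\leq e^k$ and $m!/(m-k)!\geq(m-k+1)^k$, whereas the paper invokes Stirling's formula; both give the same asymptotics, and your closing remark about Harris's estimate is unnecessary since you have already fully controlled $B_k$.
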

\begin{proof}
 We set $k=\sqrt m/{\sqrt{\log m}}$. Then
$$\left(1+\frac 1k\right)^{\frac{m-k}2}\leq \exp\left(\frac{m}{2k}\right)=\exp\left(\frac{\sqrt{m\log m}}{2}\right)$$
whereas, by Stirling's formula,
\begin{eqnarray*}
 \frac{m^m}{(m-k)^{m-k}}\times\left(\frac{(m-k)!}{m!}\right)^{1/2}&\leq&\kappa e^{k/2} \frac{m^{m/2}}{(m-k)^{\frac{m-k}2}}\\
&\leq&\kappa e^{k/2} \left(\frac m{m-k}\right)^{\frac{m-k}2}m^{k/2}\\
&\leq&\kappa e^{k/2} \exp\left(\frac{k-m}2\log\left(1-\frac km\right)\right)m^{k/2}\\
&\leq&\exp\left(\frac{\sqrt{m\log m}}2+o\big(\sqrt{m\log m}\big)\right).
\end{eqnarray*}
\end{proof}

\section{Application: the exact asymptotic behaviour of the Bohr radius}\label{SECBOHR} We now prove that $$K_n\sim_{+\infty}\sqrt{\frac{\log n}n}.$$
As we said in the Introduction, in \cite{DFOOS10}, using (\ref{EQBHDEFANT}), the authors show that
\begin{equation}
K_n=b_n\sqrt{\frac{\log n}n}\textrm{ with }b_n\geq \frac1{\sqrt 2}+o(1).\label{EQBOHRDEFANT}
\end{equation}
Replacing (\ref{EQBHDEFANT}) by Corollary \ref{CORPOL1}, we obtain (with the same proof) the estimate
$$K_n=b_n\sqrt{\frac{\log n}n}\textrm{ with }b_n\geq 1+o(1).$$
Since the proof of (\ref{EQBOHRDEFANT}) is only sketched in \cite{DFOOS10}, we nevertheless give the details
of this estimate. We shall need the following lemma due to F. Wiener (see \cite{DFOOS10}):

\begin{lemma}
\label{LEMWIENER} Let $P$ be a polynomial in $n$ variables and $P=\sum
_{m\geq0}P_{m}$ its expansion in homogeneous polynomials. If $\left\Vert P\right\Vert _{\infty}\leq1$, then $\left\Vert P_{m}\right\Vert
_{\infty}\leq1-\left\vert P_{0}\right\vert ^{2}$ for all $m>0.$
\bigskip
\end{lemma}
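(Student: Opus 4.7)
The plan is to reduce the $n$-variable statement to a one-dimensional Schwarz--Pick estimate, by slicing along a single direction and then extracting the appropriate Taylor coefficient via averaging over roots of unity.

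First I would fix $z\in\overline{\mathbb D}^n$ and consider the one-variable polynomial $\phi(\lambda)=P(\lambda z)=\sum_{m\geq 0}P_m(z)\lambda^m$. Since $|\lambda z_i|\leq|z_i|\leq 1$ for $|\lambda|\leq 1$ and $z\in\overline{\mathbb D}^n$, the hypothesis $\|P\|_\infty\leq 1$ yields $|\phi(\lambda)|\leq 1$ on $\overline{\mathbb D}$ while $\phi(0)=P_0$. Hence it suffices to prove the following one-variable statement: if $\phi:\mathbb D\to\overline{\mathbb D}$ is holomorphic with Taylor expansion $\phi(\lambda)=\sum_{n\geq 0} a_n\lambda^n$, then $|a_m|\leq 1-|a_0|^2$ for every $m\geq 1$.

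To prove this, I would extract $a_m$ while preserving the $L^\infty$ bound. Setting $\omega=e^{2\pi i/m}$, the average
\[
\Phi(\lambda)=\frac{1}{m}\sum_{k=0}^{m-1}\phi(\omega^k\lambda)
\]
satisfies $|\Phi|\leq 1$ on $\mathbb D$, and orthogonality of the $m$-th roots of unity kills every Taylor monomial of $\phi$ whose exponent is not a multiple of $m$, so $\Phi(\lambda)=a_0+a_m\lambda^m+a_{2m}\lambda^{2m}+\cdots$. Performing the change of variable $\mu=\lambda^m$, this gap structure produces a unique holomorphic $g:\mathbb D\to\overline{\mathbb D}$ with $\Phi(\lambda)=g(\lambda^m)$ and $g(\mu)=a_0+a_m\mu+a_{2m}\mu^2+\cdots$. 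Applying the Schwarz--Pick lemma to $g$ at the origin then gives $|a_m|=|g'(0)|\leq 1-|g(0)|^2=1-|a_0|^2$, which translates back to $|P_m(z)|\leq 1-|P_0|^2$; taking the supremum over $z\in\overline{\mathbb D}^n$ completes the proof.

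I do not anticipate any real obstacle: the averaging trick and Schwarz--Pick are classical. The only point worth noting is the well-definedness of the factorization $\Phi(\lambda)=g(\lambda^m)$, which is immediate from the gap structure of the Taylor expansion of $\Phi$. The degenerate case $|a_0|=1$ is harmless: the maximum modulus principle then forces $g$ to be constant, so $a_m=0$ and the claimed bound is trivial.
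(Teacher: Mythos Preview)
Your proof is correct and is precisely the classical argument for Wiener's lemma. The paper itself does not supply a proof: it merely states the lemma, attributes it to F.~Wiener, and refers the reader to \cite{DFOOS10}.
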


We begin with a polynomial $\sum_{\alpha}a_{\alpha}z^{\alpha}$ such that
$\sup_{z\in\mathbb{D}^{n}}\left\vert \sum_{\alpha}a_{\alpha}z^{\alpha
}\right\vert \leq1$. Observe that for all $z\in r\mathbb{D}^{n}$,
\[
\sum_{\alpha}|a_{\alpha}z^{\alpha}|\leq|a_{0}|+\sum_{m\geq1}r^{m}\sum
_{|\alpha|=m}|a_{\alpha}|.
\]
Let $\varepsilon>0$ and let $m_{0}\geq1$ be very large (its value will depend
on $\varepsilon$). We set $\displaystyle r=(1-2\varepsilon)\sqrt{\frac{\log n}{n}}$. Using
Wiener's lemma, Corollary \ref{CORPOL1}, and H\"{o}lder's inequality, we obtain
\[
\sum_{m\geq1}r^{m}\sum_{|\alpha|=m}|a_{\alpha}|\leq\kappa(1-|a_{0}|^{2})\sum_{m\geq1}\big(r(1+\varepsilon)\big)^{m}\binom{n+m-1}{m}^{\frac{m-1}{2m}}.
\]
Now,
\[
\binom{n+m-1}{m}\leq e^{m}\left(  1+\frac{n}{m}\right)  ^{m}
\]
so that
\[
\sum_{m\geq1}r^{m}\sum_{|\alpha|=m}|a_{\alpha}|\leq\kappa(1-|a_{0}|^{2}%
)\sum_{m\geq1}\big(r\sqrt{e}(1+\varepsilon))^{m}\left(  1+\frac{n}{m}\right)
^{\frac{m-1}{2}}.
\]
We now split the sum into several parts. First, when $m>\sqrt{n}$, then
\[
\left(  1+\frac{n}{m}\right)  ^{\frac{m-1}{2}}\leq(2\sqrt{n})^{m/2}
\]
which yields
\[
\sum_{m>\sqrt{n}}\big(r\sqrt{e}(1+\varepsilon)\big)^{m}\left(  1+\frac{n}{m}\right)  ^{\frac{m-1}{2}}\leq\sum_{m>\sqrt{n}}\left(  \sqrt{\frac{\log
n}{n}}\sqrt{2e}(1-2\varepsilon)(1+\varepsilon)n^{1/4}\right)  ^{m}
\]
and this quantity goes to zero as $n$ goes to infinity. When $m\leq\sqrt{n}$,
we just write, provided $n$ is large enough and $m\geq m_{0}$,
\[
\left(  1+\frac{n}{m}\right)  ^{\frac{m-1}{2}}\leq(1+\varepsilon)^{\frac{m}{2}}\frac{n^{m/2}}{n^{1/2}m^{m/2}}.
\]
Thus,
\[
\sum_{m_{0}\leq m\leq\sqrt{n}}r^{m}\sum_{|\alpha|=m}|a_{\alpha}|\leq\kappa(1-|a_{0}|^{2})\sum_{m_{0}\leq m\leq\sqrt{n}}\left(  (1-2\varepsilon
)(1+\varepsilon)^{\frac{3}{2}}\sqrt{\frac{\log n}{n}}\sqrt{e}\sqrt{n}\sqrt{\frac{1}{n^{1/m}m}}\right)  ^{m}.
\]
The function $m\mapsto n^{1/m}m$ is decreasing until $m=\log n$, where its
value is equal to $e\log n$, and increasing after $\log n$. This
implies
\[
\sum_{m_{0}\leq m\leq\sqrt{n}}r^{m}\sum_{|\alpha|=m}|a_{\alpha}|\leq
\kappa(1-|a_{0}|^{2})\sum_{m_{0}\leq m\leq\sqrt{n}}\left(  (1-2\varepsilon
)(1+\varepsilon)^{\frac{3}{2}}\right)  ^{m}=o(1)
\]
provided $m_{0}$ is large enough (with respect to $\varepsilon$). For the
remaining part of the sum, we use that if $\log n\geq m_{0},$ then
$m\mapsto n^{1/m}m$ is decreasing in $\left[  1,m_{0}\right]  $, so,
\[
\sum_{m=1}^{m_{0}}r^{m}\sum_{|\alpha|=m}|a_{\alpha}|\leq\kappa(1-|a_{0}%
|^{2})\sum_{m=1}^{m_{0}}\left(  \frac{(1-2\varepsilon)(1+\varepsilon
)^{\frac{3}{2}}m^{1/2m}\sqrt{e}\sqrt{\log n}}{m_{0}^{1/2}n^{1/{2m_{0}}}%
}\right)  ^{m}=o(1).
\]

\bigskip

Summing these three inequalities, we have obtained that
\[
\sum_{m\geq0}r^{m}\sum_{|\alpha|=m}|a_{\alpha}|\leq|a_{0}|+(1-|a_{0}%
|^{2})o(1)\leq1.
\]
Hence, $K_{n}\geq(1-2\varepsilon)\sqrt{\log n}/\sqrt{n}$ provided $n$ is large enough.

\smallskip

That $\limsup_{n\to+\infty}K_n\sqrt{n/\log n}\leq 1$ has already been observed in \cite{BK97}. For the sake of completeness, we recall the method. By the Kahane-Salem-Zygmund inequality,
there exist coefficients $(c_\alpha)_{|\alpha|=m}$ with $|c_\alpha|=\binom{m}{\alpha}$ such that
$$\left\|\sum_{|\alpha|=m}c_\alpha z^{\alpha}\right \|_\infty\leq \kappa \sqrt{m\log m}(m!)^{1/2}n^{\frac{m+1}2}.$$
Thus, we know that
$$K_n^m n^m=\sum_{|\alpha|=m}|c_\alpha| K_n^m\leq\left\|\sum_{|\alpha|=m}c_\alpha z^{\alpha}\right \|_\infty\leq\kappa\sqrt{m\log m}(m!)^{1/2}n^{\frac{m+1}2}$$
so that
$$K_n\leq \kappa^{1/m}(\sqrt{m\log m})^{1/m}\frac{1}{\sqrt n}n^{1/2m}(m!)^{1/2}.$$
We now choose $m=\log n$ (not surprisingly, we need the same relation between the number of variables and the degree of the polynomial!) and, then, use Stirling's formula in order to obtain the desired estimate.

\vspace{.5cm}

\noindent \textbf{Acknowledgements}. D. Pellegrino thanks CAPES for the financial support. The authors also thank Prof. Daniel Cariello for fruitful conversation and comments on the manuscript.


\begin{bibdiv}
\begin{biblist}

\bib{ba}{article}{
author={Albuquerque, N.},
author={Bayart, F.},
author={Pellegrino, D.},
author={Seoane-Sep\'{u}lveda, J. B.},
title={Sharp generalizations of the multilinear Bohnenblust--Hille inequality},
journal={J. Funct. Anal.},
status={DOI: 10.1016/j.jfa.2013.08.013},
}

\bib{BAYMONAT}{article}{
author={Bayart, F.},
   title={Hardy spaces of Dirichlet series and their composition operators},
   journal={Monatsh. Math.},
   volume={136},
   date={2002},
   number={3},
   pages={203--236},
}

\bib{berg}{book}{
author={Bergh, J.},
   author={L{\"o}fstr{\"o}m, J.},
   title={Interpolation spaces. An introduction},
   note={Grundlehren der Mathematischen Wissenschaften, No. 223},
   publisher={Springer-Verlag},
   place={Berlin},
   date={1976},
   pages={x+207},
}

\bib{Bl79}{article}{
author={Blei, R. C.},
   title={Fractional Cartesian products of sets},
   language={English, with French summary},
   journal={Ann. Inst. Fourier (Grenoble)},
   volume={29},
   date={1979},
   number={2},
   pages={v, 79--105},
}

\bib{BK97}{article}{
author={Boas, Harold P.},
   author={Khavinson, Dmitry},
   title={Bohr's power series theorem in several variables},
   journal={Proc. Amer. Math. Soc.},
   volume={125},
   date={1997},
   number={10},
   pages={2975--2979},
}

\bib{bh}{article}{
author={Bohnenblust, H. F.},
   author={Hille, E.},
   title={On the absolute convergence of Dirichlet series},
   journal={Ann. of Math. (2)},
   volume={32},
   date={1931},
   number={3},
   pages={600--622},
}

\bib{Dav73}{article}{
author={Davie, A. M.},
   title={Quotient algebras of uniform algebras},
   journal={J. London Math. Soc. (2)},
   volume={7},
   date={1973},
   pages={31--40},
}

\bib{DeFr06}{article}{
author={Defant, A.},
   author={Frerick, L.},
   title={A logarithmic lower bound for multi-dimensional Bohr radii},
   journal={Israel J. Math.},
   volume={152},
   date={2006},
   pages={17--28},
}

\bib{DFOOS10}{article}{
author={Defant, A.},
   author={Frerick, L.},
   author={Ortega-Cerd{\`a}, J.},
   author={Ouna{\"{\i}}es, M.},
   author={Seip, K.},
   title={The Bohnenblust-Hille inequality for homogeneous polynomials is hypercontractive},
   journal={Ann. of Math. (2)},
   volume={174},
   date={2011},
   number={1},
   pages={485--497},
}

\bib{def}{article}{
author={Defant, Andreas},
   author={Popa, Dumitru},
   author={Schwarting, Ursula},
   title={Coordinatewise multiple summing operators in Banach spaces},
   journal={J. Funct. Anal.},
   volume={259},
   date={2010},
   number={1},
   pages={220--242},
}

\bib{DP06}{article}{
   author={Defant, A.},
   author={Prengel, C.},
   title={Harald Bohr meets Stefan Banach},
   conference={
      title={Methods in Banach space theory},
   },
   book={
      series={London Math. Soc. Lecture Note Ser.},
      volume={337},
      publisher={Cambridge Univ. Press},
      place={Cambridge},
   },
   date={2006},
   pages={317--339},
   }

\bib{surv}{article}{
author={Defant, A.},
author={Sevilla-Peris, P.},
title={The Bohnenblust--Hille cycle of ideas: from yesterday to today},
status={preprint},
}

\bib{garling}{book}{
   author={Garling, D. J. H.},
   title={Inequalities: a journey into linear analysis},
   publisher={Cambridge University Press},
   place={Cambridge},
   date={2007},
   pages={x+335},
}

\bib{Haa}{article}{
author={Haagerup, U.},
   title={The best constants in the Khintchine inequality},
   journal={Studia Math.},
   volume={70},
   date={1981},
   number={3},
   pages={231--283 (1982)},
}

\bib{Ha72}{inproceedings}{
author={Harris, L. A.},
   title={Bounds on the derivatives of holomorphic functions of vectors},
   conference={
      title={Analyse fonctionnelle et applications (Comptes Rendus Colloq.
      Analyse, Inst. Mat., Univ. Federal Rio de Janeiro, Rio de Janeiro,
      1972)},
   },
   book={
      publisher={Hermann},
      place={Paris},
   },
   date={1975},
   pages={145--163. Actualit\'es Aci. Indust., No. 1367},}

\bib{Ka}{article}{
author={Kaijser, S.},
   title={Some results in the metric theory of tensor products},
   journal={Studia Math.},
   volume={63},
   date={1978},
   number={2},
   pages={157--170},
}

\bib{KoKw}{article}{
author={K{\"o}nig, H.},
   author={Kwapie{\'n}, S.},
   title={Best Khintchine type inequalities for sums of independent,
   rotationally invariant random vectors},
   journal={Positivity},
   volume={5},
   date={2001},
   number={2},
   pages={115--152},
}

\bib{monta}{article}{
author={Montanaro, A.},
   title={Some applications of hypercontractive inequalities in quantum
   information theory},
   journal={J. Math. Phys.},
   volume={53},
   date={2012},
   number={12},
}

\bib{jfa22}{article}{
author={Nu{\~n}ez-Alarc{\'o}n, D.},
   author={Pellegrino, D.},
   author={Seoane-Sep{\'u}lveda, J. B.},
   title={On the Bohnenblust-Hille inequality and a variant of Littlewood's 4/3 inequality},
   journal={J. Funct. Anal.},
   volume={264},
   date={2013},
   number={1},
   pages={326--336},
}

\bib{jfalimite}{article}{
author={Nu{\~n}ez-Alarc{\'o}n, D.},
   author={Pellegrino, D.},
   author={Seoane-Sep{\'u}lveda, J. B.},
   author={Serrano-Rodr{\'{\i}}guez, D. M.},
   title={There exist multilinear Bohnenblust-Hille constants $(C_n)_{n=1}^\infty$ with $\lim_{n\rightarrow\infty}(C_{n+1}-C_n)=0$},
   journal={J. Funct. Anal.},
   volume={264},
   date={2013},
   number={2},
   pages={429--463},
}

\bib{Qu95}{article}{
author={Queff{\'e}lec, H.},
   title={H. Bohr's vision of ordinary Dirichlet series; old and new results},
   journal={J. Anal.},
   volume={3},
   date={1995},
   pages={43--60},
}

\end{biblist}
\end{bibdiv}

\end{document}